\documentclass[onefignum,onetabnum]{siamonline190516}

\newcommand{\mat}[1]{\mathbf{#1}}
\newcommand{\vc}[1]{\mathbf{#1}}
\newcommand{\truMat}{\mat{A}}
\newcommand{\sampMat}{\hat{\truMat}}
\newcommand{\rhs}{\vc{b}}
\newcommand{\truSol}{\vc{x}}
\newcommand{\sampSol}{\hat{\truSol}}
\newcommand{\dimension}{n}
\newcommand{\R}{\mathbb{R}}

\newcommand{\N}{\mathbb{N}}
\newcommand{\Rdim}{\R^{\dimension}}
\newcommand{\Rdimdim}{\R^{\dimension \times \dimension}}
\newcommand{\impSol}{\tilde{\truSol}}
\newcommand{\impSolAt}[1]{\impSol_{#1}}
\newcommand{\pdcone}{S(\Rdim)_+}

\newcommand{\errBayes}[2]{\mathcal{E}_{#2}(#1)}

\newcommand{\E}{\mathbb{E}}
\newcommand{\Pb}{\mathbb{P}}
\newcommand{\normMat}{\mat{B}}
\newcommand{\id}{\mat{I}}
\newcommand{\errMat}{\hat{\mat{Z}}}
\newcommand{\augMat}{\hat{\mat{K}}}
\newcommand{\augFac}{\beta}
\newcommand{\optAugFac}{\augFac^*}

\newcommand{\prior}{P}

\newcommand{\autoCor}{\mat{R}}

\newcommand{\bootstrapAugFac}{\tilde{\augFac}^*}

\newcommand{\augOp}{\tilde{\truMat}}
\newcommand{\augOpAt}[1]{\augOp_{#1}}
\newcommand{\params}{\omega}
\newcommand{\paramSpace}{\Omega}
\newcommand{\paramDist}[1]{\mathbb{P}_{#1}}
\newcommand{\paramToMat}{\mathcal{M}}
\newcommand{\truParam}{\params^*}
\newcommand{\matDistAt}[1]{D_{#1}}
\newcommand{\matDist}{\matDistAt{\truParam}}
\newcommand{\sampParam}{\hat{\params}}

\newcommand{\gl}{GL(\mathbb{R}^n)}
\newcommand{\transErrMat}{\hat{\mat{Y}}}
\newcommand{\errAtom}{\mat{Z}}

\DeclareMathOperator*{\tr}{tr}



\usepackage{lipsum}
\usepackage{amsfonts}
\usepackage{graphicx}
\usepackage{epstopdf}
\usepackage{algorithmic}
\ifpdf
  \DeclareGraphicsExtensions{.eps,.pdf,.png,.jpg}
\else
  \DeclareGraphicsExtensions{.eps}
\fi

\usepackage{enumitem}
\setlist[enumerate]{leftmargin=.5in}
\setlist[itemize]{leftmargin=.5in}


\newsiamremark{remark}{Remark}
\newsiamremark{hypothesis}{Hypothesis}
\crefname{hypothesis}{Hypothesis}{Hypotheses}
\newsiamthm{claim}{Claim}

\headers{Operator Shifting for General Noisy Matrix Systems}{P.A. Etter, L. Ying}

\title{Operator Shifting for General Noisy Matrix Systems}

\author{Philip A. Etter \thanks{Institute for Computational and Mathematical Engineering, Stanford University} (\email{paetter@stanford.edu}) \and Lexing Ying \thanks{Department of Mathematics, Stanford University} (\email{lexing@stanford.edu})}

\usepackage{amsopn}





\begin{document}

\maketitle

\begin{abstract}
In the computational sciences, one must often estimate model parameters from data subject to noise and uncertainty, leading to inaccurate results. In order to improve the accuracy of models with noisy parameters, we consider the problem of reducing error in a linear system with the operator corrupted by noise. Our contribution in this paper is to extend the elliptic \emph{operator shifting} framework from Etter, Ying '20 to the general nonsymmetric matrix case. Roughly, the operator shifting technique is a matrix analogue of the James-Stein estimator. The key insight is that a shift of the matrix inverse estimate in an appropriately chosen direction will reduce average error. In our extension, we interrogate a number of questions --- namely, whether or not shifting towards the origin for general matrix inverses always reduces error as it does in the elliptic case. We show that this is usually the case, but that there are three key features of the general nonsingular matrices that allow for adversarial examples not possible in the symmetric case. We prove that when these adversarial possibilities are eliminated by the assumption of noise symmetry and the use of the residual norm as the error metric, the optimal shift is always towards the origin, mirroring results from Etter, Ying '20. We also investigate behavior in the small noise regime and other scenarios. We conclude by presenting numerical experiments (with accompanying source code) inspired by Reinforcement Learning to demonstrate that operator shifting can yield substantial reductions in error.
\end{abstract}

\begin{keywords}
Operator Shifting, Random Matrices, Monte Carlo, Polynomial Expansion, Asymmetric Matrices.
\end{keywords}

\begin{AMS}
65F99, 62A99, 60B20
\end{AMS}

\section{Introduction}

Numerical linear algebra is a crucial foundation for research across a massive breadth of technical domains. It forms the computational bedrock of everything from data science to computational physics. Even non-linear problems are usually solved via linear approximation. One typically writes such systems via matrix notation,
\begin{equation} \label{eq:base_system}
    \truMat \truSol = \rhs \,,
\end{equation}
where $\truMat \in \Rdimdim$ and $\truSol, \rhs \in \Rdim$ for $\dimension \in \N$.

However, linear systems are often imperfect. Scientific problems can be subject to noise in the underlying data or model parameters, sampling error, or even epistemic uncertainty --- each potentially giving rise to error in predictions or inferences (see, for example, \cite{soize2005comprehensive, palmer2005representing}). So in reality, one is more often confronted by a system
\begin{equation} \label{eq:noisy_system}
    \sampMat \sampSol = \rhs \,,
\end{equation}
where one constructs $\sampMat$ from data to approximate the true $\truMat$. Hence, $\sampSol = \sampMat^{-1} \rhs \in \Rdim$ is the solution one actually obtains when we solving the observed system naively. If the uncertainty or noise is severe enough, the discrepancy between $\sampSol$ and $\truSol$ may be a real practical concern.

These situations are fairly common in the computational sciences. As an example, $\sampMat$ might be a Laplacian for a Markov Chain that is not known outright, but must be sampled via trajectories through the state space. Or perhaps, $\sampMat$ might be the scattering operator through a background that is estimated from data.

Regardless of the specific application, there are a wide variety of techniques available for obtaining a better estimate of the true $\truSol$. For example, one may suppose a certain prior for $\truSol$, as is common in such techniques as Tikhonov regularization \cite{tikhonov1963solution}. In this paper, however, we take a fundamentally different tact. Instead of trying to applying post processing or Bayesian regularization to $\sampSol$, we will instead examine this problem from the standpoint of building an improved estimator for the matrix $\sampMat^{-1}$. The fundamental question we seek to investigate in this paper and the prequel \cite{etter2020operator} is whether there exist operations that can make $\sampMat^{-1}$ potentially more accurate.

\subsection{Operator Shifting}

As this paper is an extension of Etter, Ying '20 \cite{etter2020operator}, some discussion of the previous results of operator shifting is inevitable. We will attempt to give the high level details in this section. 

The fundamental idea of operator shifting is to \emph{shift} the estimator $\sampMat^{-1}$ by an appropriately chosen function $\augMat(\sampMat)$ of $\sampMat$,
\begin{equation}
    \augOpAt{\augFac}^{-1} = \sampMat^{-1} - \augFac \, \augMat \,.
\end{equation}
In continuity with previous work, we refer to $\augMat$ as the \emph{shift matrix} and the scalar quantity $\augFac \in \R$ as the \emph{shift factor}. After choosing the shift matrix, one optimizes $\augFac$ such that the error
\begin{equation}
    \E \| \augOpAt{\augFac}^{-1} - \truMat^{-1} \|^2 \,,
\end{equation}
is minimized with respect to some matrix norm $\|\cdot \|$. The reader will note that performing this optimization is impossible outright, as it requires knowledge of the quantity $\truMat^{-1}$ we are trying to estimate. This issue is not fatal, however, and can be effectively addressed via a bootstrap procedure that we will discuss later.

With regards to the choice of shift matrix $\augMat$, the simplest choice is simply $\augMat(\sampMat) = \sampMat^{-1}$. For $\augFac \in (0, 1)$, this choice corresponds to shrinking the operator towards zero. Indeed, the original intent behind operator shifting was to produce an analogue of the high-dimensional James-Stein \cite{james1992estimation} for matrices. The reasoning involved is that since the underlying space is high-dimensional, the error $\sampMat - \truMat$ will likely be close to orthogonal to $\truMat$ in the Frobenius inner product. Thus, shrinking $\sampMat$ towards the origin logically brings one closer to $\truMat$ in expectation. 

In contrast to the James-Stein setting, however, we must also contend with the presence of the matrix inversion operator $(\cdot)^{-1}$ --- as our goal is to estimate $\truMat^{-1}$ and not $\truMat$. In this respect, there is an extra wrinkle of complexity that one must deal with. 

Fortunately, in the case of positive-definite symmetric matrices matrix inversion has very nice structure. Most importantly, it is convex with respect to the L\"owner order. This means that the naive $\sampMat^{-1}$ will always dominate $\truMat^{-1}$ if $\sampMat$ is unbaised (this is analogous to Jensen's inequality, see \cref{fig:jensen}). The confluence of these two factors --- high-dimensionality and the convexity of $(\cdot)^{-1}$ --- suggest that shrinking towards the origin is the most natural operation to perform on $\sampMat^{-1}$.

\begin{figure}
    \centering
    \includegraphics[scale=0.25]{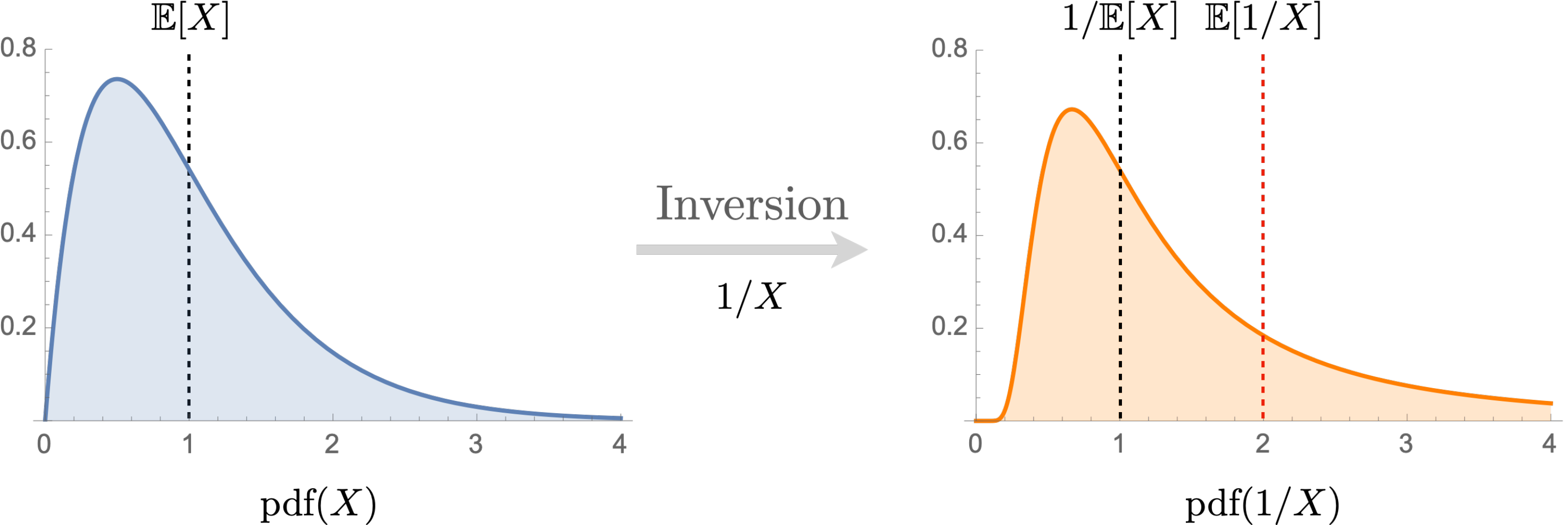}
    \caption{An example of the upward bias induced by inversion. If we take a single sample of the of the scalar random variable $X \sim \Gamma(2, 1/2)$, and invert it, the probability distribution of the $1/X$ has an expectation double that of $1/\E[X]$. Hence, estimating $1/\E[X]$ naively will likely give a significant overestimate. The same principle can apply when $X$ is a random matrix.}
    \label{fig:jensen}
\end{figure}

Indeed, the results of Etter, Ying '20 \cite{etter2020operator} bear this out. In particular, the results of the this paper demonstrate that for positive definite symmetric matrices, shrinking towards the origin always reduces error,

\begin{theorem} \label{thm:etter20}
(Informal, Etter, Ying '20) For all distributions on $\sampMat$ for which $\E[\sampMat] = \truMat$ and $\E[\sampMat^{-2}]$ exists, we have that $\optAugFac \in (0, 1]$ for both the Frobenius norm and residual\footnote{Defined as $\|\mat{X}\|^2 = \| \truMat \mat{X} \|_F^2$} norms.
\end{theorem}

The other primary contributions of Etter, Ying '20 include:

\begin{itemize}
    \item Efficient Monte Carlo estimation of $\augFac$ with monotonic polynomial approximations in the residual norm (to be defined in \cref{sec:gaurantees}).
    \item Lower bounds relating the optimal reduction in error to the variance of the noise in $\sampMat$.
    \item Lower bounds for how far $\optAugFac$ is away from $0$.
\end{itemize}

Of course, the shift $\augMat(\sampMat) = \sampMat^{-1}$ is only one of a huge number of potential shifts. Still, a simple dimensional analysis suggests that $\augMat(\sampMat)$ should always be a homogeneous function of $\sampMat^{-1}$. So, it is natural to consider shifts of the form
\begin{equation} \label{eq:gen_aug}
    \augMat(\sampMat) = \normMat \sampMat^{-1} \autoCor \,,
\end{equation}
for constant matrices $\normMat, \autoCor \in \Rdimdim$. As it turns out, analogues of \cref{thm:etter20} and the bullet points above are provable for this larger class of shifts (as proved in \cite{etter2020operator}). 

\subsection{Novel Contributions and Paper Overview}

We stress that, prior to the work done herein, all of the above applies strictly to \emph{symmetric positive-definite matrices} only. The central question of this paper is \textbf{to what extent the theory of operator shifting for positive-definite matrices can be extended to general nonsingular matrices.}  

In particular, we will interrogate the following questions:
\begin{enumerate}
    \item Is it the case that shrinking the operator $\sampMat^{-1}$ towards zero always produces better results as in the positive definite case? 
    \begin{itemize}
        \item \textbf{If not}, what are the salient structural differences between the positive definitive cone and the general matrix group that allow for counter-examples? 
        \item What do these counter-examples look like?
        \item What are the practical consequences of a potentially negative optimal shift factor?
    \end{itemize}
    \item How large of a class of operator shifts can we extend the theory to? (i.e., does the theory generalize to shifts of the form \cref{eq:gen_aug}?
    \item Does bootstrapped operator shifting on general matrices empirically reduce error?
\end{enumerate}

We partially answer question (1) in \cref{sec:gaurantees}. Our work shows that under certain conditions, it is true that shrinking the operator always produces a reduction in error. We list all of the conditions in \cref{sec:conditions} and give a proof in \cref{sec:proof}. Furthermore, as we have discovered, each one of these conditions has a corresponding illustrative counter-example that both exemplify the critical structural features of the general matrix group and demonstrate the necessity of the aforementioned conditions. We present these examples in \cref{sec:assumptions}. Some of these examples depend on the presence of ``large noise,'' hence, we dedicate \cref{sec:small_noise} to examining what happens when higher order noise terms are negligible. Then, we answer question (2) in \cref{sec:aniso} and provide the main theorem for this paper, \cref{thm:main}.

In \cref{sec:bootstrap}, we introduce the machinery that we will use to approximate the optimal shift factor $\optAugFac$ using a bootstrap optimization procedure. We then use this machinery to give practical algorithms for approximation of $\optAugFac$ using Monte Carlo in \cref{sec:montecarl}. We also provide an adversarial example of how $\optAugFac < 0$ can cause the bootstrapping procedure to fail arbitrarily badly in estimating $\optAugFac$ in \cref{sec:badexample}.

For the numerical experiments section of this paper we draw upon problems from Reinforcement Learning (RL). RL problems frequently require one to approximately solve linear systems (value function estimation for Markov decision processes\footnote{Markov decision processes are Markov processes whose transition probabilities are determined by a controller.}) and linear programs (policy optimization for Markov decision processes). However, the underlying problems can usually only be estimated from data due to both memory and sampling restrictions, making RL the perfect domain in which to apply our technique. Our numerical experiments in \cref{sec:exp} demonstrate that operator shifting can provide substantial error reduction on simple value function estimation problems.

\section{Related Work}

As discussed in the previous section, operator shifting is heavily inspired by the work of James and Stein. Stein's original paper \cite{stein1956inadmissibility} proved the relatively shocking conclusion that in dimensions $\geq 3$, the standard estimator is actually \emph{inadmissible} for the quadratic loss, as shrinking the estimate towards any fixed point by an appropriately chosen amount will always reduce loss in expectation. This idea was later refined by James and Stein in their paper on estimation under quadratic loss \cite{james1992estimation}. Fundamentally, we view our work as taking this idea and applying it to the novel setting of matrices corrupted by noise.

We remark that there are a number of connections with the field of statistical inverse problems. For example, one is often interested in estimating an object from incomplete or noisy measurements. One relevant example is the area of \emph{semi-blind deconvolution}, where one has measurements of a unknown function convolved with a kernel that is known with some uncertainty (as opposed to \emph{blind deconvolution}, where one knows nothing about the kernel). This uncertainty in the underlying operator is a shared feature between our work and this body of literature; however, we should note the both our formalism and the approach are quite different.

The operative approach of related papers in statistical inverse problems tends to be to introduce regularization on both the operator and the recovery target. An example would be the pioneering work of Golub and Van Loan on Total Least Squares (TLS) \cite{golub1980analysis}. Golub and Van Loan optimize over both perturbations to linear features and feature weights themselves, minimizing a residual term together with a regularization on the feature perturbations. Another example includes techniques from \emph{semi-blind deconvolution}, where one introduces a free estimate of the kernel with an appropriate regularization term into the inverse problem optimization \cite{buccini2018semiblind}. Other approaches (i.e., double regularization) involve introducing a free estimate of the operator but constraining the free estimate so that it doesn't differ from the observed operator by too much \cite{bleyer2013double}. In a gross over-simpliciation that we will perform for readability, we will characterize the above approaches as roughly solving a variant of an optimization problem that looks like
\begin{equation}
    \min_{\vc{x}, \mat{E}} \| (\sampMat + \mat{E}) \vc{x} - \vc{b} \|^2 + R_1(\vc{x}) + R_2(\mat{E}) \,,
\end{equation}
where here $\mat{E}$ denotes a correction to the linear features of $\sampMat$, and $R_1$ and $R_2$ denote appropriate regularizers on the recovery target $\vc{x}$ and the matrix correction $\mat{E}$.

But while these works are related, there are stark differences between our respective formalism and approach. The primary difference between our setting and TLS is that we assume we are operating in the regime where $\sampMat$ is non-singular and square, whereas TLS is typically applied in under-determined scenarios. In \emph{semi-blind deconvolution} settings, the choice of regularizer $R_2$ and optimization process both depend heavily on the assumption that the operator $\sampMat$ comes from a kernel convolution. We make no such assumptions about the specific character of $\sampMat$ in our work; though it may certainly be the case that our technique functions better for some problems than others. In addition, other works also do not frame error reduction in terms of producing an estimator for a random matrix in the way that we do here, and hence their analyses are focused more on the optimization methods themselves and less on the underlying probabilistic effects that arise from noisy operators. As a final note, we observe that optimizing over $\mat{E}$ is in many practical scenarios infeasible. On most reinforcement learning problems, for example, even storing the operator in memory would be prohibitively expensive --- however, this fact is something that operator shifting can deal with fairly well, since it only needs to optimize over a single parameter $\augFac$ rather than an entire matrix $\mat{E}$. Moreover, the sheer number of degrees of freedom $\mat{E}$ adds to the optimization in our setting has a danger of contributing to overfitting unless one is careful with regularization.

Other statistical inverse problems literature pertaining to noisy or uncertain operators include situations where the forward operator is too computationally expensive to use in an optimization procedure and is replaced by a learned proxy \cite{lunz2021learned}. This is not directly relevant to our problem at hand, but notable nonetheless. Another situation studied in the literature is when one has a set of noisy input-output pairs of the underlying operator. One can use these input-output pairs to construct regularizer for solving the inverse problem \cite{aspri2020data}. These works are both very different to the approach we take in this paper.

In addition to statistical inverse problems, the field of \emph{model uncertainty} is tangentially relevant. Model uncertainty --- both its quantification and representation --- is an important topic in many branches of computational science, from structural dynamics \cite{soize2005comprehensive} to weather and climate prediction \cite{palmer2005representing}. However, work in model uncertainty is typically tied very closely to a specific domain. In contrast, our work here does not make domain specific assumptions.

Uncertainty quantification (UQ) is another relevant, but ultimately tangential subject. UQ is concerned with quantifying the probability distributions associated with calculations or physical processes. For example, one may be interested in the variance of a set of outputs given a distribution of noise on a set of inputs. Practitioners can quantify this through a variety of means --- including, but not limited to, Monte Carlo techniques \cite{marzouk2016introduction}, stochastic Galerkin projection \cite{xiu2002wiener}, or collocation \cite{xiu2005high}. In our situation, however, we are more interested in the reduction of error rather than quantifying its distribution.

The central problem in this paper is also not too dissimilar to the setting of matrix completion seen in \cite{candes2010matrix, keshavan2009matrix}. In matrix completion, one usually seeks to recover a low-rank ground truth matrix from observations that have been corrupted by additive noise. Regardless, the respective settings of operator shifting and matrix completion are still different. The operator shifting setting operates purely on full-rank matrices, and not those of low-rank.

We should briefly mention that the mathematical branch of random matrix theory (RMT) studies the spectral properties of random matrix ensembles \cite{anderson2010introduction, tao2012topics}. However, RMT results usually apply only when the entries of the random matrices are independent and in the large matrix limit. We find these assumptions to be too stringent for the problem at hand.

In conclusion, we do not believe that the setting we introduce in this paper has been studied in the proposed fashion before. There is little precedent in the literature for the operator shifting method beyond the original paper \cite{etter2020operator}.

\section{Theoretical Guarantees} \label{sec:gaurantees}

In order to provide a theory for nonsymmetric operator shifting that mirrors the theory for symmetric operator shifting, we focus on the inverse operator error
\begin{equation} \label{eq:bayeserr}
    \errBayes{\augOpAt{\augFac}^{-1}}{\normMat, \autoCor} \equiv \E \left[\| \augOpAt{\augFac}^{-1} - \truMat^{-1} \|_{\normMat, \autoCor}^2\right] \,.
\end{equation}

Here, the $\|\cdot\|_{\normMat, \autoCor}$ is a generalized version of a matrix inner product norm for symmetric positive definite $\normMat$ and $\autoCor$. The corresponding matrix inner product, $\langle \cdot, \cdot \rangle_{\normMat, \autoCor}$, we define as:
\begin{equation} \label{eq:normdef}
    \begin{split}
    \langle \mat{X}, \mat{Y} \rangle_{\normMat, \autoCor} &\equiv \tr(\autoCor \mat{X}^T \normMat \mat{Y}) = \tr(\autoCor^{1/2} \mat{X}^T \normMat \mat{Y} \autoCor^{1/2} )\,, \\
    \| \mat{X} \|_{\normMat, \autoCor}^2 &\equiv \langle \mat{X}, \mat{X} \rangle_{\normMat, \autoCor} = \tr((\normMat^{1/2} \mat{X} \autoCor^{1/2})^T (\normMat^{1/2} \mat{X} \autoCor^{1/2})) = \| \normMat^{1/2} \mat{X} \autoCor^{1/2} \|^2_{F} \,,
    \end{split}
\end{equation}
where $\|\cdot\|_F$ denotes the Frobenius norm. Note that when $\normMat$ and $\autoCor$ are the identity, this simply becomes the standard Frobenius inner product. In this way, the above norm is a natural generalization of the Frobenius norm for matrix operators. Just like with the Frobenius norm, one can interpret the $\normMat, \autoCor$ norm via the use expectations. Namely, if $\vc{b} \sim \prior$ is a random vector with second moment matrix $\E[\vc{b} \vc{b}^T] = \autoCor$, then it is the case that:
\begin{equation}
    \| \mat{X} \|^2_{\normMat, \autoCor} = \E_{\vc{b} \sim \prior} \| \mat{X} \vc{b} \|^2_{\normMat} \,,
\end{equation}
where $\| \cdot\|^2_{\normMat}$ is the vector norm induced by the symmetric positive definite matrix $\normMat$, i.e., $\| \vc{x} \|^2_{\normMat} = \vc{x}^T \normMat \vc{x}$. This means that one may interpret \cref{eq:bayeserr} as being the average squared error of the solution of the linear noisy linear system \cref{eq:noisy_system} if the right hand side is sampled from the prior $\prior$. In mathematical notation, we may write:
\begin{equation}
    \errBayes{\augOpAt{\augFac}^{-1}}{\normMat, \autoCor} = \E_{\sampMat} \E_{\rhs \sim \prior} \left[ \| \augOpAt{\augFac}^{-1} \rhs - \truMat^{-1} \rhs \|_\normMat^2 \right] \,.
 \end{equation}

The goal of operator shifting is to approximate the value of $\optAugFac$ that minimizes the above error. Namely,
\begin{equation}
    \optAugFac \equiv \underset{\augFac}{\text{argmin }} \errBayes{\augOpAt{\augFac}^{-1}}{\normMat, \autoCor} \,.
\end{equation}
While exact optimization of this quantity is out of reach for the aforementioned reason that one does not explicitly know $\truMat$, one can develop expectations of how $\optAugFac$ should behave through the use of mathematical theory, and then use bootstrap Monte Carlo to approximate it.

In accordance with our discussion of the previous work on symmetric operator shifting from our introduction, the primary theoretical question we seek out to answer is whether shifting towards the origin (i.e., $\optAugFac > 0$) can be expected to always decrease error as it does in the symmetric positive definite case. 

To begin to answer this question, we perform a simple calculation,
\begin{equation}
    \optAugFac = \frac{\E \langle \augMat, \sampMat^{-1} - \truMat^{-1} \rangle_{\normMat, \autoCor}}{\E \| \augMat\|^2_{\normMat, \autoCor}} \,,
\end{equation}
hence the sign of $\optAugFac$ is equivalent to the sign of $\E \langle \augMat, \sampMat^{-1} - \truMat^{-1} \rangle_{\normMat, \autoCor}$, and we would therefore like an shift method to exhibit $\E \langle \augMat, \sampMat^{-1} - \truMat^{-1} \rangle_{\normMat, \autoCor} \geq 0$. We will begin by studying the simplest choice of operator shift,
\begin{equation} \label{eq:basicaug}
    \augMat = \sampMat^{-1} \,.
\end{equation}
The overshooting effect demonstrated in \cref{fig:jensen} gives one reason to believe that this choice of shift is a reasonable one, as it shrinks the inverse operator towards zero. 

\subsection{Conditions} \label{sec:conditions}

Unlike in the symmetric case, to prove a rigorous statement about the sign of $\optAugFac$ for \cref{eq:basicaug} in the nonsymmetric case, one must place a number of additional conditions on the constituent components of the model. We will discuss the necessity of these conditions in more detail in \cref{sec:assumptions} -- in short, each of these conditions has an adversarial example associated with it that causes the theory to fail when the conditions are not assumed. Throughout the following proofs and discussion we will denote the noise in the matrix $\sampMat$ with the symbol $\errMat$,
\begin{eqnarray}
    \errMat \equiv \sampMat - \truMat \,.
\end{eqnarray}

In order to prove nonnegativity of $\optAugFac$, we introduce the following constraints:

\begin{enumerate}
    \item \textbf{Mean-Zero Noise}: We assume that the noise matrix $\errMat$ is mean-zero, i.e., $\E[\sampMat] = \truMat$.
    \item \textbf{Isotropy}: We assume that $\autoCor = \E_{\prior} [\rhs \rhs^T] = \id$. This means that there is no preferred direction in which we care about the accuracy of the estimator $\augOpAt{\augFac}^{-1}$.
    \item \textbf{Noise Symmetry}: We assume that the distribution of the matrix $\sampMat$ is symmetric about its mean, namely that $\errMat$ has the same distribution as $-\errMat$.
    \item \textbf{Residual Norm}: We specifically choose our norm of interest $\normMat$ to be the residual norm $\normMat = \truMat^T \truMat$. The residual norm is often used as an objective in nonsymmetric iterative methods.
\end{enumerate}

We note that for the theory of elliptic operator shifting, items (2) through (4) are not necessary -- and so their apparent necessity in the nonsymmetric case is a curious mathematical phenomenon of the general nonsingular matrices $\gl$. 

\subsection{Proof} \label{sec:proof} With the conditions outlined above, we proceed to prove the positivity of the shift factor $\optAugFac$,

\begin{theorem} \label{thm:thm}
Let $\sampMat$ be a random matrix, invertible almost everywhere, such that $\E[\sampMat^{-2}]$ exists. Under the conditions outlined in \cref{sec:conditions}, the optimal shift factor is always nonnegative.
\end{theorem}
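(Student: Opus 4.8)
The plan is to reduce the sign of $\optAugFac$ to a single scalar inequality and then force it through a symmetrization over the sign of the noise. Since the denominator $\E_{\matDist}\|\augMat\|_{\normMat,\autoCor}^2 = \E_{\matDist}\|\truMat\sampMat^{-1}\|_F^2$ is strictly positive (here $\augMat = \sampMat^{-1}$ and $\sampMat^{-1}\neq\mat{0}$), the sign of $\optAugFac$ coincides with the sign of the numerator $\E_{\matDist}\langle \sampMat^{-1}, \sampMat^{-1}-\truMat^{-1}\rangle_{\normMat,\autoCor}$. First I would insert the isotropy assumption $\autoCor=\id$ and the residual norm $\normMat = \truMat^T\truMat$ into the definition of the inner product and use the cyclic invariance of the trace. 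Writing $\mat{G} \equiv \truMat\sampMat^{-1}$, the bilinear form collapses to a Frobenius inner product,
\begin{equation}
\langle \sampMat^{-1}, \sampMat^{-1}-\truMat^{-1}\rangle_{\normMat,\autoCor} = \tr(\mat{G}^T\mat{G}) - \tr(\mat{G}) = \|\mat{G}\|_F^2 - \tr(\mat{G}),
\end{equation}
so the theorem reduces to proving $\E_{\matDist}[\|\mat{G}\|_F^2 - \tr(\mat{G})] \ge 0$.

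Next I would pass to the variable $\mat{H} \equiv \errMat\truMat^{-1}$, for which $\mat{G}^{-1} = \sampMat\truMat^{-1} = \id + \mat{H}$ and hence $\mat{G} = (\id+\mat{H})^{-1}$; the hypothesis that $\E[\sampMat^{-2}]$ exists is precisely what guarantees $\E\|\mat{G}\|_F^2$ and $\E\tr(\mat{G})$ are finite, legitimizing the manipulations below. By noise symmetry (with unbiasedness), $\sampMat = \truMat+\errMat$ and $\truMat-\errMat$ are identically distributed, so $\mat{H}$ and $-\mat{H}$ have the same law; this lets me replace the integrand by its average over the two coupled realizations $\mat{G}_+ = (\id+\mat{H})^{-1}$ and $\mat{G}_- = (\id-\mat{H})^{-1}$. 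Setting $a = \id+\mat{H}$ and $b = \id-\mat{H}$ (which commute), the trace terms combine through $a^{-1}+b^{-1} = a^{-1}(a+b)b^{-1} = 2(\id-\mat{H}^2)^{-1}$, giving
\begin{equation}
\frac{1}{2}\big(\tr(\mat{G}_+) + \tr(\mat{G}_-)\big) = \tr\big((\id-\mat{H}^2)^{-1}\big) = \tr(b^{-1}a^{-1}) = \tr(\mat{G}_-\mat{G}_+).
\end{equation}

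The final step is a pointwise (in $\mat{H}$) inequality: after symmetrization the integrand equals
\begin{equation}
\frac{1}{2}\big(\|\mat{G}_+\|_F^2 + \|\mat{G}_-\|_F^2\big) - \tr(\mat{G}_-\mat{G}_+) = \frac{1}{2}\,\big\|\mat{G}_+ - \mat{G}_-^T\big\|_F^2 \ge 0,
\end{equation}
using $\tr(\mat{G}_-\mat{G}_+) = \langle \mat{G}_+, \mat{G}_-^T\rangle_F$ (equivalently, AM-GM on the Frobenius inner product). Taking expectations gives $\E_{\matDist}[\|\mat{G}\|_F^2 - \tr(\mat{G})] \ge 0$, whence $\optAugFac \ge 0$.

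I expect the main obstacle to be the asymmetry of $\mat{G}$, which blocks any naive pointwise bound: indeed $\|\mat{G}\|_F^2 \ge \tr(\mat{G})$ is simply false for a fixed $\mat{G}$ (take $\mat{G} = \frac{1}{2}\id$), so the result is inherently a statement in expectation and the symmetrization is doing the real work. The crux — and the place I would be most careful — is verifying that averaging the linear trace term over $\pm\mat{H}$ produces exactly the cross term $\tr(\mat{G}_-\mat{G}_+)$ that the quadratic Frobenius terms dominate; this rests on the matrix identity $a^{-1}+b^{-1} = 2(\id-\mat{H}^2)^{-1}$ together with $\tr((\id-\mat{H}^2)^{-1}) = \tr(\mat{G}_-\mat{G}_+)$. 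A secondary concern is the integrability and the interchange of expectation with the $\pm$ averaging, which is exactly where the moment hypothesis on $\E[\sampMat^{-2}]$ enters.
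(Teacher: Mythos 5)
Your proposal is correct and is essentially the paper's own proof: both reduce the claim to $\E\tr\left[(\id+\transErrMat)^{-T}(\id+\transErrMat)^{-1}-(\id+\transErrMat)^{-T}\right]\ge 0$, symmetrize over $\pm\transErrMat$ (your $\mat{H}$), and invoke the resolvent identity $(\id+\mat{H})^{-1}+(\id-\mat{H})^{-1}=2(\id-\mat{H}^2)^{-1}$ together with cyclicity of the trace. Your completed square $\tfrac{1}{2}\|\mat{G}_+-\mat{G}_-^T\|_F^2$ is precisely the trace of the positive semidefinite product $\left((\id-\mat{Y})^{-T}-(\id+\mat{Y})^{-1}\right)\left((\id-\mat{Y})^{-1}-(\id+\mat{Y})^{-T}\right)$ that the paper expands and rearranges, so the two arguments coincide step for step.
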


\begin{proof}
We must verify:
\begin{equation} \label{eq:neededcond}
    \E \langle \sampMat^{-1} , \sampMat^{-1} - \truMat^{-1} \rangle_{\truMat^T \truMat, \id} \geq 0 \,.
\end{equation}
Expanding,
\begin{equation} \label{eq:expanding1}
    \E \langle \sampMat^{-1} , \sampMat^{-1} - \truMat^{-1} \rangle_{\truMat^T \truMat, \id} = \tr \E \left[\sampMat^{-T} \truMat^{T} \truMat (\sampMat^{-1} - \truMat^{-1})\right] \,,
\end{equation}
Since $\truMat$ is invertible and $\sampMat$ is invertible almost everywhere, let us define the matrix $\transErrMat$,
\begin{equation} \label{eq:ydef}
    \transErrMat = \sampMat \truMat^{-1} - \id = \errMat \truMat^{-1} \,.
\end{equation}
Note that this definition implies that $\E[\transErrMat] = \mat{0}$ as well as that the distribution of $\transErrMat$ is symmetric, since the distribution of $\errMat$ is symmetric by condition (3). 

One can rearrange to obtain an expression for $\sampMat^{-1}$,
\begin{equation}
    \sampMat^{-1} = \truMat^{-1} (\id + \transErrMat)^{-1} \,.
\end{equation}
And we substitute the above expression into \cref{eq:expanding1},
\begin{equation}
\begin{split}
     \E \langle \sampMat^{-1} , \sampMat^{-1} - \truMat^{-1} \rangle_{\truMat^T \truMat, \id} &= \tr \E \left[(\id + \transErrMat)^{-T} (\id + \transErrMat)^{-1} - (\id + \transErrMat)^{-T}\right] \,. \\
\end{split}
\end{equation}
Since the distribution of $\transErrMat$ is symmetric, it suffices to verify that
\begin{equation} \label{eq:neededtowork}
    \tr \left[ (\id + \mat{Y})^{-T} (\id + \mat{Y})^{-1} - (\id + \mat{Y})^{-T}\right] + \tr \left[ (\id - \mat{Y})^{-T} (\id - \mat{Y})^{-1} - (\id - \mat{Y})^{-T}\right] \geq 0 \,,
\end{equation}
or alternatively,
\begin{equation} \label{eq:verifygoal}
    \tr \left[ (\id + \mat{Y})^{-T} (\id + \mat{Y})^{-1} +  (\id - \mat{Y})^{-T} (\id - \mat{Y})^{-1} \right] \geq \tr \left[(\id + \mat{Y})^{-T} + (\id - \mat{Y})^{-T}\right] \,,
\end{equation}
for all matrices $\mat{Y}$ for which $\id + \mat{Y}$ and $\id - \mat{Y}$ are nonsingular.

In order to verify \cref{eq:verifygoal}, we begin by considering the matrix,
\begin{equation} \label{eq:strictpos}
    ((\id - \mat{Y})^{-T} - (\id + \mat{Y})^{-1})((\id - \mat{Y})^{-1} - (\id + \mat{Y})^{-T}) \succeq \mat{0} \,.
\end{equation}
Since this matrix is positive definite (by virtue of the fact that it has the form $\mat{M}^T \mat{M}$), it follows that the trace of the above matrix is positive,
\begin{equation}
    \tr \left[ ((\id + \mat{Y})^{-T} - (\id - \mat{Y})^{-1})((\id + \mat{Y})^{-1} - (\id - \mat{Y})^{-T})\right] \geq 0 \,.
\end{equation}
The above inequality can be rearranged,
\begin{equation} \label{eq:almostgoal}
     \tr \left[ (\id + \mat{Y})^{-T} (\id + \mat{Y})^{-1} + (\id - \mat{Y})^{-1}(\id - \mat{Y})^{-T}\right] \geq \tr\left[(\id - \mat{Y}^2)^{-T} + (\id - \mat{Y}^2)^{-1}\right] \,.
\end{equation}
Note that, by the cyclic property of the trace, that the left hand sides of both \cref{eq:verifygoal} and \cref{eq:almostgoal} are identical. Therefore, it suffices to prove that
\begin{equation} \label{eq:goal2}
    \tr\left[(\id - \mat{Y}^2)^{-T} + (\id - \mat{Y}^2)^{-1}\right] = \tr \left[(\id + \mat{Y})^{-T} + (\id - \mat{Y})^{-T}\right] \,,
\end{equation}
from which \cref{eq:verifygoal} will follow.

To prove \cref{eq:goal2}, we note that
\begin{equation}
    \begin{split}
        &\tr \left[(\id + \mat{Y})^{-T} + (\id - \mat{Y})^{-T}\right] \\
        &\quad = \tr \left[ (\id + \mat{Y})^{-T} (\id - \mat{Y})^T (\id - \mat{Y})^{-T} + (\id + \mat{Y})^{-T} (\id + \mat{Y})^T (\id - \mat{Y})^{-T}\right] \\
        &\quad = \tr \left[ (\id + \mat{Y})^{-T}(\id - \mat{Y} + \id + \mat{Y})^T (\id - \mat{Y})^{-T} \right] \\
        &\quad = 2\tr \left[ (\id + \mat{Y})^{-T} (\id - \mat{Y})^{-T}\right] \\
        &\quad = 2 \tr \left[ (\id - \mat{Y}^2)^{-T} \right] \\
        &\quad = \tr \left[ (\id - \mat{Y}^2)^{-T} + (\id - \mat{Y}^2)^{-1} \right] \,.
    \end{split}
\end{equation}
This proves the nonnegativity of the optimal shift factor.
\end{proof}

We remark that the critical step that requires isotropy is the assertion that the left hand sides of both \cref{eq:verifygoal} and \cref{eq:almostgoal} are equal, namely that
\begin{equation}
    \tr \left[ (\id - \mat{Y})^{-1} (\id - \mat{Y})^{-T}\right] = \tr \left[ (\id - \mat{Y})^{-T} (\id - \mat{Y})^{-1} \right] \,.
\end{equation}
This is no longer necessarily true if we replace the isotropic trace operator $\tr(\cdot)$ with an anisotropic operator $\tr(\autoCor^{1/2} (\cdot) \autoCor^{1/2})$ for general $\autoCor$. 

We also note that the theorem above only proves $\optAugFac \geq 0$ and not $\optAugFac > 0$. Getting to $\optAugFac > 0$ requires an additional condition:

\begin{corollary} \label{coro:first}
    Under the conditions of \cref{thm:thm}, the optimal shift factor is positive if and only if $\Pb((\sampMat \truMat^{-1} - \id)^T \neq -(\sampMat \truMat^{-1} - \id)) > 0$.
\end{corollary}

\begin{proof}
Note that the sole inequality in \cref{thm:thm} is
\begin{equation} \label{eq:ineqprob}
    \E \tr \left[ ((\id + \transErrMat)^{-T} - (\id - \transErrMat)^{-1})((\id + \transErrMat)^{-1} - (\id - \transErrMat)^{-T})\right] \geq 0 \,.
\end{equation}
It thus suffices to show the equivalence between the two events
\begin{equation} \label{eq:event1}
    \left\{ \tr \left[ ((\id + \transErrMat)^{-T} - (\id - \transErrMat)^{-1})((\id + \transErrMat)^{-1} - (\id - \transErrMat)^{-T})\right] = 0 \right\} \,,
\end{equation}
and
\begin{equation}
    \left\{ (\sampMat \truMat^{-1} - \id)^T = -(\sampMat \truMat^{-1} - \id)) \right\} = \left\{ \transErrMat^T = - \transErrMat \right\} \,.
\end{equation}
    Clearly, if $\transErrMat$ is anti symmetric, then this implies \cref{eq:event1}. Conversely, if \cref{eq:event1} holds, then because the matrix inside the trace is positive semi-definite, the only way that the trace can be zero is if 
\begin{equation}
    (\id + \transErrMat)^{-T} - (\id - \transErrMat)^{-1} = \mat{0} \,.
\end{equation}
Rearranging the above gives $\transErrMat^T = -\transErrMat$. Therefore \cref{eq:ineqprob} is strictly positive in expectation if and only if $\Pb((\sampMat \truMat^{-1} - \id)^T = -(\sampMat \truMat^{-1} - \id))) > 0$, proving the corollary.
\end{proof}

It is interesting to note that juxtaposition of the above result with the results for SPD matrices from Etter, Ying '20 \cite{etter2020operator}. For SPD matrices, the optimal shift factor is always positive unless $\sampMat = \truMat$ almost surely. On the other hand, \cref{coro:first} tells us that for general matrices, the further $\sampMat \truMat^{-1} - \id$ is on average from its negative transpose $-(\sampMat \truMat^{-1} - \id)^T$, the larger the gap in \cref{eq:ineqprob}, and hence the larger $\optAugFac$ and better operator augmentation will perform. In the worst case scenario, it is possible to adversarially force $\sampMat \truMat^{-1} - \id$ to be anti-symmetric almost surely, in which case, operator augmentation will be no better than the naive estimate. 

\section{The Necessity of Conditions} \label{sec:assumptions}

In this section, we provide adversarial examples of how violating the conditions in \cref{sec:conditions} can lead to situations where the optimal shift factor $\optAugFac$ for $\augMat = \sampMat^{-1}$ is negative. This provides a concrete lens of how the group $\gl$ differs from the symmetric positive definite cone $\pdcone$, as such situations are not possible in the SPD (symmetric positive definite) case. These examples demonstrate how one might use the structure of $\gl$ to construct situations where increasing the ``noise'' in our estimator can actually lead to more accurate results.

\subsection{The Necessity of Isotropy} \label{sec:isotropy}

First, we investigate the necessity of the isotropy condition. Suppose that $\truMat = \id$ and $\errMat$ has two-atom distribution with atoms:
\begin{equation}
    \errAtom_1 = \left[\begin{array}{cc} 0 & k \\ 0 & -k \end{array}\right], \qquad \errAtom_2 = \left[\begin{array}{cc} 0 & -k \\ 0 & k \end{array}\right] \,,
\end{equation}
where each atom occurs with equal probability, and $k \gg 1$. Clearly in this situation, the error distribution of $\errMat$ is symmetric and has mean zero. Therefore, all of the other conditions in \cref{sec:conditions} are met. The shifted operator $\augOpAt{\augFac}^{-1}$ is given by $\augOpAt{\augFac}^{-1} = (1 - \augFac) \sampMat^{-1}$ and the random matrix $\sampMat$ has two outcomes with equal probability,
\begin{equation}
    \truMat_1 = \left[\begin{array}{cc} 1 & k \\ 0 & -k + 1 \end{array}\right], \qquad \truMat_2 = \left[\begin{array}{cc} 1 & -k \\ 0 & k + 1 \end{array}\right] \,.
\end{equation}
These outcomes have inverses
\begin{equation}
    \truMat_1^{-1} = \left[\begin{array}{cc} 1 & k / (k + 1) \\ 0 & - 1/(k - 1) \end{array}\right], \qquad \truMat_2^{-1} = \left[\begin{array}{cc} 1 & k / (k + 1) \\ 0 & 1 / (k - 1) \end{array}\right] \,.
\end{equation}
This means that in either case, we have
\begin{equation}
   \sampMat^{-1} = \left[\begin{array}{cc} 1 & 1 \\ 0 & 0 \end{array}\right] + O(1/k) \,.
\end{equation}

With this matrix ensemble, we can take the prior $\prior$ to be deterministic, such that
\begin{equation}
    \rhs = \left[\begin{array}{c} 2 \\ - 1 \end{array}\right] \,.
\end{equation}
This immediately makes the problem with this setup evident, as
\begin{equation}
    \sampMat^{-1} \rhs = \left[\begin{array}{c} 1 \\ 0 \end{array}\right] + O(1/k), \qquad \truMat^{-1} \rhs = \left[\begin{array}{c} 2 \\ -1 \end{array}\right] \,.
\end{equation}
It is therefore clear that the objective
\begin{equation}
    \errBayes{\augOpAt{\augFac}}{\truMat^T \truMat} = \errBayes{\augOpAt{\augFac}}{\id} = \E \| (1 - \augFac) \sampMat^{-1} \rhs - \truMat^{-1}\rhs \|_2^2
\end{equation}
must achieve its minimum at
\begin{equation}
    \optAugFac = -1 + O(1/k) \,.
\end{equation}

One might initially conclude that the ability of this example to undermine \cref{thm:thm} has something to do with the assumption that $k$ is very large. This is not the case. The largeness of $k$ is assumed only for illustrative purposes. One can verify numerically (see \cref{fig:lowesteig}) for the above choice of $\sampMat$, that $\E[\sampMat^{-T} \sampMat^{-1} - \sampMat^{-T}]$ has a negative eigenvalue for all $k \neq \pm 1, 0$. This means that if we let $vc{v}$ be the corresponding eigenvector and take $\autoCor = \vc{v} \vc{v}^T$, the quantity determining the sign of $\optAugFac$ (see \cref{eq:neededcond}) becomes
\begin{equation}
    \E \langle \sampMat^{-1} , \sampMat^{-1} - \truMat^{-1} \rangle_{\truMat^T \truMat, \autoCor} = \vc{v}^T \E[\sampMat^{-T} \sampMat^{-1} - \sampMat^{-T}] \vc{v} < 0 \,.
\end{equation} 
Therefore, for any $k \neq \pm 1, 0$, one can find a $\rhs$ such that $\optAugFac$ is negative.

\begin{figure}
    \centering
    \includegraphics[scale=0.7]{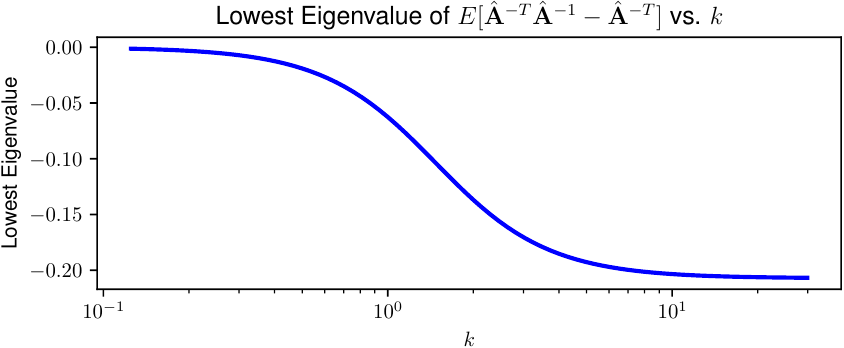}
    \caption{The lowest eigenvalue of $\E[\sampMat^{-T} \sampMat^{-1} - \sampMat^{-T}]$ as defined in \cref{sec:isotropy}.}
    \label{fig:lowesteig}
\end{figure}

In particular, one should note that without a requirement of positive definiteness, it is possible to create a situation where $\sampMat$ is always ``larger'' than $\truMat$. This runs counter to the intuition behind operator shifting in the symmetric positive definite case \cite{etter2020operator}, where such a situation is not possible. 

\subsection{Outlier Masking and the Importance of Noise Symmetry} \label{sec:noisesym}

The second condition that one needed to prove the results in \cref{sec:gaurantees} is the presence of symmetry in the noise distribution. In this section, we will see how if this is not required, it is possible to construct an adversarial example where $\optAugFac < 0$ for the shift \cref{eq:basicaug}. For this example, we take $\truMat = \id$ and let $\errMat$ have a distribution of three equally probable atoms, given by
\begin{equation}
    \errAtom_1 = \id, \qquad \errAtom_2 = k \id, \qquad \errAtom_3 = -(k + 1)\id  \,,
\end{equation}
where $k \gg 1$. Note that this distribution is mean zero. Computing the atoms of the distribution $\sampMat^{-1}$,
\begin{equation}
    \truMat^{-1}_1 = \frac{1}{2} \id, \qquad \truMat^{-1}_2 = \frac{1}{k + 1} \id, \qquad \truMat^{-1}_3 = -\frac{1}{k} \id \,.
\end{equation}
Now, note that in order to minimize the quantity
\begin{equation}
    \errBayes{\augOpAt{\augFac}}{\truMat^T \truMat} = \E \| \augOpAt{\augFac}^{-1} - \id \|_F^2 = \frac{2}{3} \left(\frac{1 - \augFac}{2} - 1\right)^2 + \frac{2}{3} \left(\frac{1 - \augFac}{k + 1} - 1\right)^2 + \frac{2}{3} \left(-\frac{1 - \augFac}{k} - 1\right)^2 \,,
\end{equation}
one can verify by taking the derivative and setting it to zero that
\begin{equation}
    \optAugFac = -1 + O(1/k) \,.
\end{equation}
Therefore, the optimal shift will grow the inverse operator instead of shrinking it. 

The message of this example is that \emph{outliers in the matrix noise can mask distribution imbalances in the region near $\truMat$} that can cause $\E[\sampMat^{-1}]$ to both lie in the direction of $\truMat^{-1}$ while at the same time being dominated by $\truMat^{-1}$. Indeed, we have that $\E[\sampMat^{-1}] \approx \frac{1}{2} \id \preceq \id = \truMat^{-1}$ (it bears repeating that such a feat is impossible in the symmetric positive definite setting where $\E[\sampMat^{-1}] \succeq \truMat^{-1}$ \cite{etter2020operator}). The importance of noise symmetry is that it forces the distribution of $\errMat$ to be balanced in the region around $\truMat$, even if the distribution contains large outliers.

\subsection{The Importance of Conditioning and a Counter-Example for the Frobenius Norm} \label{sec:condition_frob}

Our final counter-example concerns the use of the Frobenius norm in the objective rather than the residual norm. In the symmetric positive definite case, one can prove the positivity of the optimal shift factor for a large range of different objective norms \cite{etter2020operator}. However, as we will see in this section, there are adversarial ways to break norms other than the residual norm in the nonsymmetric case. We will focus here on giving an example that shows how if using the Frobenius norm instead of the residual norm makes it possible to have a negative optimal shift factor for the shift \cref{eq:basicaug}.

To begin, consider the ground truth matrix
\begin{equation}
    \truMat = \left[\begin{array}{cc}
        1 & - \epsilon \\
        \epsilon & 0 
    \end{array}\right] \,,
\end{equation}
where $0 < \epsilon \ll 1$. For the noise $\errMat$, we reuse the noise distribution from \cref{sec:isotropy} --- consider a two-atom distribution with atoms
\begin{equation}
    \errAtom_1 = \left[\begin{array}{cc} 0 & 1 \\ -1 & 0 \end{array}\right], \qquad \errAtom_2 = \left[\begin{array}{cc} 0 & -1 \\ 1 & 0 \end{array}\right] \,,
\end{equation}
where each atom has equal probability. Note that this distribution is symmetric and mean zero. Since $\epsilon$ is extremely small, this means that the distribution of $\sampMat$ will have two atoms whose inverses are approximately
\begin{equation}
    \truMat^{-1}_1 \approx \left[\begin{array}{cc} 1 & 1 \\ -1 & 0 \end{array}\right]^{-1} = \left[\begin{array}{cc} 0 & 1 \\ -1 & 1 \end{array}\right], \qquad \truMat^{-1}_2 \approx \left[\begin{array}{cc} 1 & -1 \\ 1 & 0 \end{array}\right]^{-1} = \left[\begin{array}{cc} 0 & -1 \\ 1 & 1 \end{array}\right] \,.
\end{equation}
Note that the inverses of these atoms have the same Frobenius norm as the atoms themselves. In sharp contrast, the ill-conditioning of $\truMat$ means that $\truMat^{-1}$ is an order of magnitude larger than $\truMat$,
\begin{equation}
    \truMat^{-1} = \left[\begin{array}{cc}
        0 & -\epsilon^{-1}  \\
        \epsilon^{-1} & \epsilon^{-2} 
    \end{array}\right] \,.
\end{equation}
Immediately, we see that in order for $\optAugFac$ to minimize
\begin{equation}
    \errBayes{\augOpAt{\augFac}}{\id} = \frac{1}{2} \| (1 - \augFac) \truMat^{-1}_1 - \truMat^{-1}\|_F^2 + \frac{1}{2} \| (1 - \augFac) \truMat^{-1}_2 - \truMat^{-1} \|_F^2  \,,
\end{equation}
one must therefore have $\optAugFac \sim -\epsilon^{-2}$, for which $\errBayes{\augOpAt{\optAugFac}}{\id} \sim \epsilon^{-1}$. For other growth orders of $\augFac$, one has $\errBayes{\augOpAt{\augFac}}{\id} = \omega(\epsilon^{-1})$. It is therefore clear that for $\epsilon$ small enough, $\optAugFac$ will be negative. 

In contrast to the $L_2$ norm, note that the residual norm matrix for this problem is given by
\begin{equation}
    \truMat^T \truMat = \left[\begin{array}{cc}
        1 + \epsilon^2 & \epsilon  \\
        \epsilon & 0 
    \end{array}\right] \,,
\end{equation}
Therefore, we see that the reason why \cref{thm:thm} holds in the residual norm but not in the $L_2$ norm is because the residual norm places significantly less weight on the part of the matrix $\truMat^{-1}$ that contributes to the $\truMat^{-1}$'s large increase of magnitude over $\truMat$. This means that the residual norm $\truMat^T \truMat$ accounts for ill-conditioning on $\truMat$ in a way that the $L_2$ norm does not.

This example therefore also demonstrates the importance of conditioning in the ground truth matrix $\truMat$. It is perfectly possible that $\truMat$ lies close to a singular matrix, while the outcomes of $\sampMat$ are moved away from singularity by the noise imparted by $\errMat$. If this is the case, $\E[\sampMat^{-1}]$ will be small in magnitude compared to $\truMat^{-1}$ and shrinking the operator $\sampMat^{-1}$ further will not reduce average error in the Frobenius sense.

Note that symmetric positive definite setting \cite{etter2020operator} avoids this issue, since if $\sampMat$ is symmetric positive definite everywhere, it is impossible for $\E[\sampMat]$ to be close to the origin without a significant chunk of the probability distribution also lying close to the origin. This ensures that $\E[\sampMat^{-1}]$ will always spectrally dominate $\sampMat^{-1}$, and shifting the operator $\sampMat^{-1}$ towards $\mat{0}$ will reduce error. 

\section{The Small Noise Regime Allows for Nonsymmetric Noise} \label{sec:small_noise}

As we saw in \cref{sec:noisesym}, the presence of large adversarial outliers can completely mask local imbalances in the noise distribution near $\truMat$. These local imbalances can be severe enough to invalidate \cref{thm:thm}. Naturally, the example presented in \cref{sec:noisesym} is quite extreme, so one might ask if the issue inherent is not necessarily the \emph{symmetry} of the noise, but rather the \emph{magnitude} of the noise. To answer this question, we consider the \emph{small noise regime}, where deviations in $\transErrMat$ are very small relative to $\E[\transErrMat] = \id$. It turns out, that if one assumes that terms of order $O(\E[\|\transErrMat\|_F^3])$ are negligible --- what we term the \emph{small noise regime} --- then the symmetry assumption is unnecessary, as we will see momentarily.

For this discussion, we duplicate the setting of \cref{thm:thm}, except we replace the condition that the noise distribution is symmetric with the condition that noise terms of the order $O(\E[\|\transErrMat\|_F^3])$ are negligible. Recall that the statement necessary for \cref{thm:thm} to be true was
\begin{equation} \label{eq:smallgoal}
\begin{split}
     \E \langle \sampMat^{-1} , \sampMat^{-1} - \truMat^{-1} \rangle_{\truMat^T \truMat, \id} &= \tr \E \left[(\id + \transErrMat)^{-T} (\id + \transErrMat)^{-1} - (\id + \transErrMat)^{-T}\right] \geq 0 \,. \\
\end{split}
\end{equation}
We define the function $f$,
\begin{equation}
    f(\mat{Y}) \equiv \tr \left[(\id + \mat{Y}))^{-T} (\id + \mat{Y}))^{-1} - (\id + \mat{Y}))^{-T}\right] \,.
\end{equation}
Taking a second order Taylor expansion of $f$ about $\transErrMat = \mat{0}$, we obtain
\begin{equation}
    f(\transErrMat) = f(\mat{0}) + \delta f(0) \, \transErrMat + \frac{1}{2} \delta^2 f(0) \, (\transErrMat, \transErrMat) + O(\|\transErrMat\|_F^3) \,.
\end{equation}
Note that $f(\mat{0}) = 0$ and that the first term is linear in $\transErrMat$. Hence, in expectation, both of these terms vanish and we are left with
\begin{equation}
    \E[ f(\transErrMat)] = \frac{1}{2} \E[\delta^2 f(0) \, (\transErrMat, \transErrMat)] + O(\E[\|\transErrMat\|_F^3]) \,.
\end{equation}
A calculation of $\delta^2 f(0) \, (\transErrMat, \transErrMat)$ gives
\begin{equation}
\begin{split}
    \E[\delta^2 f(0) \, (\transErrMat, \transErrMat)] &= \E\tr \left[ 2 \transErrMat^T \transErrMat^T + 2 \transErrMat^T \transErrMat + 2 \transErrMat \transErrMat - 2 \transErrMat^T \transErrMat^T \right] \\
    &= 2 \, \E \tr \left[\transErrMat^T \transErrMat + \transErrMat \transErrMat \right] \\
    &= \E\tr \left[ \transErrMat^T \transErrMat^T + \transErrMat^T \transErrMat + \transErrMat \transErrMat^T + \transErrMat \transErrMat \right] \\
    &= \E\tr \left[ (\transErrMat + \transErrMat^T)^T (\transErrMat + \transErrMat^T) \right] \geq 0 \,.
\end{split}
\end{equation}
Indeed, if $\transErrMat$ is not anti-symmetric almost surely, then the above is in fact a strict inequality. This mirrors the result of \cref{coro:first}. Regardless, we know that \cref{eq:smallgoal} must be true to second order.

\section{Main Theorem for More General Shifts} \label{sec:aniso}

As mentioned in the introduction, in the elliptic case \cite{etter2020operator}, one can prove a reduction in error for a variety of different shifts. In particular, the energy norm in the elliptic shifting setting has an extensive theory regarding the approximation of $\optAugFac$. This therefore begs the question: do more general shifts of the form \cref{eq:gen_aug} retain their nice properties in the nonsymmetric shifting setting --- where the residual norm plays the role of the energy norm in the elliptic shifting setting? \Cref{sec:isotropy} provides a definitive answer to this question: it is not possible unless the modified second moment is isotropic and the chosen norm is the residual norm.
 
Nonetheless, while the wide number of choices regarding norms and shifts do not translate to the nonsymmetric shifting setting, the operator shifting framework does provide a way for handling anisotropic $\autoCor \neq \id$. Namely, one chooses the operator shift
\begin{equation} \label{eq:autocoraug}
    \augMat \equiv \sampMat^{-1} \autoCor^{-1} \,.
\end{equation}
It is immediate that the results of \cref{thm:thm} hold for this choice of shift ---  as the $\autoCor^{-1}$ will cancel the $\autoCor$ in the error objective. We restate this conclusion into the main theorem of this paper:

\begin{theorem} \label{thm:main}
(Main Theorem) Let $\sampMat$ be a random matrix, invertible almost everywhere, such that $\E[\sampMat^{-2}]$ exists. Suppose that the distribution of $\sampMat$ is symmetric about its mean, and that $\E[\sampMat] = \truMat$. Let $\rhs \sim \prior$ with nonsingular second moment matrix $\E[\rhs \rhs^T] = \autoCor$. Consider the residual error
\begin{equation} \label{eq:general_obj}
    \errBayes{\augOpAt{\augFac}}{\truMat^T \truMat} = \E_{\sampMat} \E_{\rhs \sim \prior} \left[\|\augOpAt{\augFac}^{-1} \rhs - \truMat^{-1} \rhs \|^2_{\truMat^T \truMat}\right] = \E \left[\|\augOpAt{\augFac}^{-1} - \truMat^{-1} \|^2_{\truMat^T \truMat, \autoCor}\right]\,.
\end{equation}
Consider the operator shift:
\begin{equation}
    \augOpAt{\augFac}^{-1} = \sampMat^{-1} - \augFac \sampMat^{-1} \autoCor^{-1} \,.
\end{equation}
Then the $\augFac$ optimizing \cref{eq:general_obj} is always nonnegative. Furthermore, the optimal $\augFac$ is strictly positive if and only if $\Pb((\sampMat \truMat^{-1} - \id)^T \neq -(\sampMat \truMat^{-1} - \id)) > 0$.
\end{theorem}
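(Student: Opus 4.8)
The plan is to reduce this anisotropic statement directly to \cref{thm:thm} by exploiting the way the factor $\autoCor^{-1}$ in the augmentation $\augMat = \sampMat^{-1}\autoCor^{-1}$ interacts with the weight $\autoCor$ appearing in the semi-norm $\|\cdot\|_{\truMat^T\truMat,\autoCor}$. As in \cref{sec:proof}, the sign of $\optAugFac$ is governed entirely by the sign of the numerator $\E_{\matDist}\langle \augMat, \sampMat^{-1} - \truMat^{-1}\rangle_{\truMat^T\truMat,\autoCor}$, so it suffices to show that this quantity is nonnegative.

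First I would substitute $\augMat = \sampMat^{-1}\autoCor^{-1}$ into the inner product and expand using the definition \cref{eq:normdef}. Since $\autoCor = \E_{\prior}[\rhs\rhs^T]$ is symmetric (hence $\autoCor^{-T} = \autoCor^{-1}$) and nonsingular by hypothesis, $\autoCor^{1/2}$ exists and is symmetric, and the transpose of the augmentation is $\augMat^T = \autoCor^{-1}\sampMat^{-T}$. The numerator therefore becomes $\tr\E[\autoCor^{1/2}\autoCor^{-1}\sampMat^{-T}\truMat^T\truMat(\sampMat^{-1}-\truMat^{-1})\autoCor^{1/2}]$.

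Next I would use the cyclic property of the trace to move the trailing $\autoCor^{1/2}$ to the front, combining it with the leading $\autoCor^{1/2}\autoCor^{-1}$ to produce $\autoCor\autoCor^{-1} = \id$. This cancellation collapses the expression to exactly $\tr\E[\sampMat^{-T}\truMat^T\truMat(\sampMat^{-1}-\truMat^{-1})]$, which is precisely the quantity \cref{eq:expanding1} shown to be nonnegative in the proof of \cref{thm:thm}. From that point onward the entire argument of \cref{sec:proof} --- which relies only on unbiasedness, noise symmetry, and the residual-norm choice $\normMat = \truMat^T\truMat$, all of which are in force here --- applies verbatim and yields $\optAugFac \geq 0$.

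The only thing to verify carefully is the bookkeeping of the $\autoCor$-factors through the trace: the reduction works because the single $\autoCor^{-1}$ introduced by the augmentation \cref{eq:autocoraug} exactly neutralizes the two half-powers $\autoCor^{1/2}$ built into the weighted inner product. I do not expect a genuine obstacle, since the augmentation was constructed precisely for this cancellation; the one subtlety worth stating explicitly is that isotropy was invoked in \cref{thm:thm} only to reach \cref{eq:expanding1} in the first place, so recovering that same expression via the $\autoCor^{-1}$ cancellation is enough to inherit the conclusion without requiring $\autoCor = \id$.
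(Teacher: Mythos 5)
Your proposal is correct and matches the paper's own proof essentially verbatim: the paper likewise observes that, by the definition \cref{eq:normdef}, the symmetry of $\autoCor$, and the cyclic property of the trace, $\langle \sampMat^{-1}\autoCor^{-1}, \sampMat^{-1} - \truMat^{-1}\rangle_{\truMat^T\truMat,\autoCor} = \langle \sampMat^{-1}, \sampMat^{-1} - \truMat^{-1}\rangle_{\truMat^T\truMat,\id}$, and then invokes \cref{thm:thm}. Your added remark that isotropy entered \cref{thm:thm} only through the identity-weighted trace expression \cref{eq:expanding1}, so the cancellation suffices without $\autoCor = \id$, is a correct and fair gloss on why the reduction is legitimate.
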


\begin{proof}
As in \cref{thm:thm}, we must verify
\begin{equation} \label{eq:thm2cond}
    \E \langle \sampMat^{-1} \autoCor^{-1}, \sampMat^{-1} - \truMat^{-1} \rangle_{\truMat^T \truMat, \autoCor} \geq 0 \,.
\end{equation}
But note, by definition in \cref{eq:normdef} and symmetry of $\autoCor$, that
\begin{equation}
    \langle \sampMat^{-1} \autoCor^{-1}, \sampMat^{-1} - \truMat^{-1} \rangle_{\truMat^T \truMat, \autoCor} = \langle \sampMat^{-1}, \sampMat^{-1} - \truMat^{-1} \rangle_{\truMat^T \truMat, \id}
\end{equation}
The condition \cref{eq:thm2cond} is therefore equivalent to
\begin{equation}
    \E \langle \sampMat^{-1}, \sampMat^{-1} - \truMat^{-1} \rangle_{\truMat^T \truMat, \id} \geq 0 \,,
\end{equation}
which we proved in \cref{thm:thm}. Likewise, the claim about positivity was proved in \cref{coro:first}.
\end{proof}

\section{Bootstrap Formalism} \label{sec:bootstrap}

In this section, we prepare to give an algorithm for estimating $\optAugFac$ by providing a mathematical formalism that will enable us to write down an algorithm using bootstrap Monte Carlo. In our formalism, we assume that there exists some underlying parameter space $\paramSpace$ (with a sigma algebra $\Sigma$) that produces matrices $\mat{M} = \paramToMat(\params)$ through a measurable mapping $\paramToMat : \paramSpace \longrightarrow \gl$. Here $\gl$ denotes the group of non-singular matrices in $\Rdimdim$. Elements $\params \in \paramSpace$ may represent any number of things, e.g., measurements of a scattering background, edge weights, vertex positions, etc, that cary sufficient information to generate their respective matrices $\mat{M} = \paramToMat(\params)$. For example, $\params \in \paramSpace$ may be a weighted graph, and $\paramToMat(\params)$ may denote its Laplacian. 

In this parameter space $\paramSpace$, we assume that there exists some \emph{unobserved} true system parameters $\truParam \in \paramSpace$ that produce the true matrix $\truMat = \paramToMat(\truParam)$. The central assumption of our bootstrap procedure is the ability to sample $\sampMat$ given the parameters $\truParam$ --- much in the same way one can use sufficient statistics to bootstrap samples from a distribution.

To codify this, there must exist a parameterized family of distributions $\matDistAt{\params}$ over $\gl$ that describes the observed randomness in the system if $\params$ were to be the true system parameters. In practice, the following algorithms tend to be easier to implement if one takes the view that noise acts on parameter space $\paramSpace$ rather than directly on the matrices themselves. For most problems, this makes intrinsic sense, i.e., there may be noise in the edge weights in a graph, noise in measured scattering background, etc. Therefore, we assume the existence of parameterized family of distributions $\paramDist{\params}$ on $\paramSpace$ whose pushforward under $\paramToMat$ gives $\matDistAt{\params}$. That is, we use $\paramDist{\params}$ to talk about the noise as distributed over parameters, whereas we use $\matDistAt{\params}$ to talk about noise as distributed over the corresponding matrices. A concrete example of this formalism is given in \cref{sec:badexample}.

\section{Monte Carlo Estimation} \label{sec:montecarl}

To build an algorithm from this theory, we attempt to estimate the optimal shift factor with bootstrap Monte Carlo. The quantity of interest is
\begin{equation} \label{eq:bootbase}
    \optAugFac = \frac{\E_{\matDist} \langle \augMat, \sampMat^{-1} - \truMat^{-1} \rangle_{\truMat^T \truMat, \autoCor}}{\E_{\matDist} \| \augMat\|^2_{\truMat^T \truMat, \autoCor}} \,.
\end{equation}
As mentioned previously, it is not possible to compute this quantity directly, since we only have a single sample $\sampMat$ and we do not have access to the ground truth $\truMat$ or its distribution $\matDist$. We must therefore try to approximate these quantities as best as possible with the available information. Suppose that $\sampParam \in \paramSpace$ is the parameter instance that generates the matrix $\sampMat = \paramToMat(\sampParam)$. To bootstrap \cref{eq:bootbase}, we replace all instances of $\truMat$ with $\sampMat$ and draw random instances from $\matDistAt{\sampParam}$ instead of $\matDist$. We get
\begin{equation}
    \tilde{\augFac} = \frac{\E_{\matDistAt{\sampParam}} \langle \augMat_b, \sampMat^{-1}_b - \sampMat^{-1} \rangle_{\sampMat^T \sampMat, \autoCor}}{\E_{\matDistAt{\sampParam}} \| \augMat_b \|^2_{\sampMat^T \sampMat, \autoCor}} \,,
\end{equation}
where $\augMat_b$ and $\sampMat_b$ are drawn from $\matDistAt{\sampParam}$. This can then be discretized with Monte Carlo:
\begin{equation}
    \hat{\augFac} = \frac{\sum_{i = 1}^m \rhs_i^T \augMat_{b, i}^{T} \sampMat^T \sampMat (\sampMat^{-1}_{b, i} - \sampMat^{-1}) \rhs_i}{\sum_{i = 1}^m \rhs_i^T \augMat_{b, i}^{T} \sampMat^T \sampMat \augMat_{b, i} \rhs_i} \,, \qquad \rhs_i \sim \prior \; \text{i.i.d.},\quad \sampMat_{b, i} \sim \matDistAt{\sampParam} \; \text{i.i.d.} \,.
\end{equation}
If one opts to use the shift provided by \cref{eq:autocoraug}, the expression simplifies to 
\begin{equation} \label{eq:montecarlo}
\begin{aligned}[c]
\hat{\augFac} &= 1 - \frac{\sum_{i = 1}^m \rhs_i^T \sampMat_{b, i}^{-T} \sampMat^T \rhs_i}{\sum_{i = 1}^m \vc{q}_i^T \sampMat_{b, i}^{-T} \sampMat^T \sampMat \sampMat_{b, i}^{-1} \vc{q}_i} \,,
\end{aligned}
\qquad
\begin{aligned}[c]
\rhs_i &\sim \mathcal{N}(\mat{0}, \id) \; \text{i.i.d.}, \\
\vc{q}_i &\sim \mathcal{N}(\mat{0}, \autoCor^{-1}) \; \text{i.i.d.}, \\
\sampMat_{b, i} &\sim \matDistAt{\sampParam} \; \text{i.i.d.} \,.
\end{aligned}
\end{equation}
Note that $\rhs_i$ and $\vc{q}_i$ do not necessarily have to be normal -- they must just have the same second moment matrix as the normal distributions specified. For good measure, one might also threshold the above expression to guarantee $\hat{\augFac} \geq 0$. Finally, one can now estimate the true solution to the system \cref{eq:base_system} via
\begin{equation} 
    \impSolAt{\hat{\augFac}} = \augOpAt{\hat{\augFac}}^{-1} \rhs = (\sampMat^{-1} - \hat{\augFac} \sampMat^{-1} \autoCor^{-1}) \rhs = \sampMat^{-1} (\rhs - \hat{\augFac} \autoCor^{-1} \rhs) \,.
\end{equation}
The figure \cref{fig:my_label} presents a flow-chart of the process.

\begin{figure}
    \centering
    \includegraphics[scale=0.45]{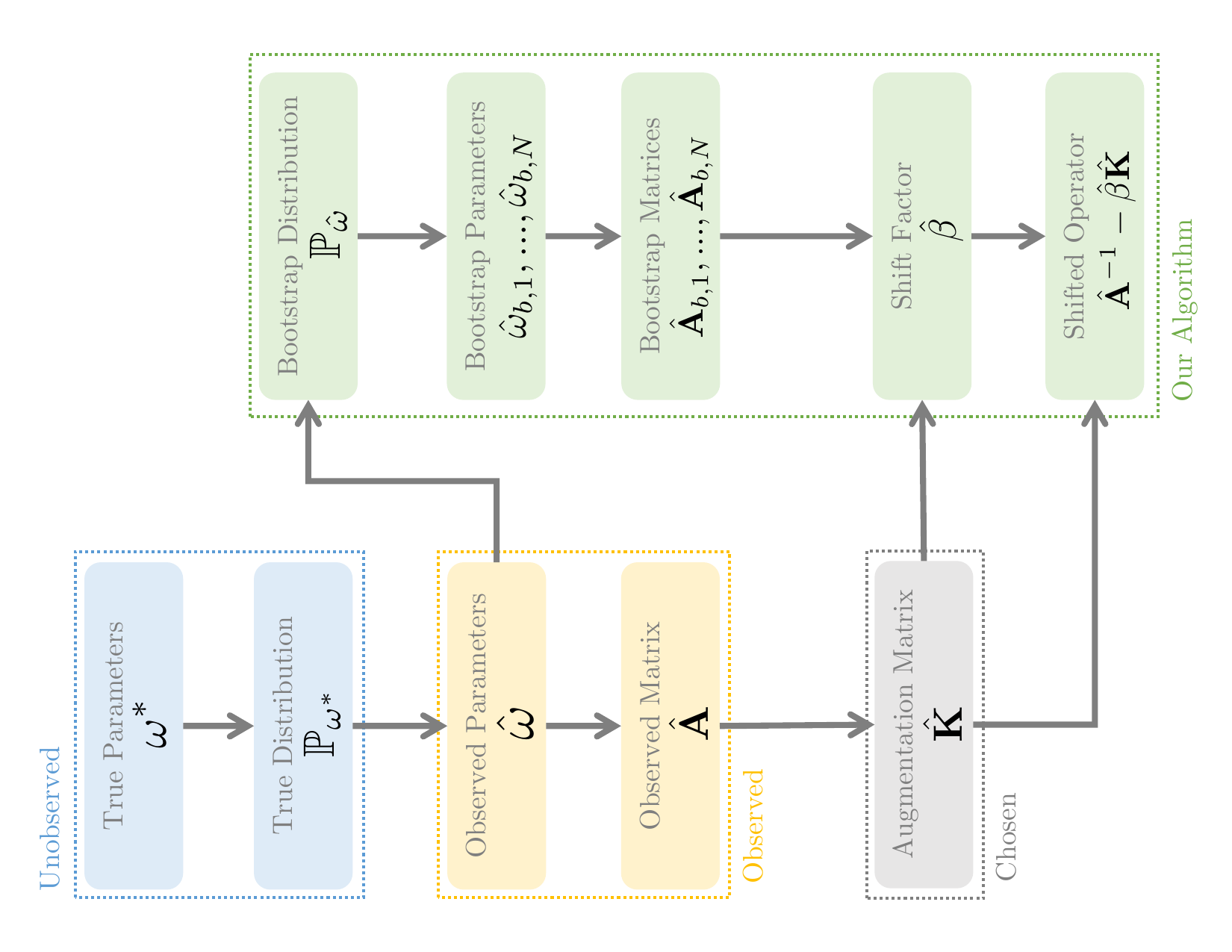}
    \caption{A flow-chart of the probabilistic setting of operator shifting as well as the algorithm itself. Operator shifting aims to find a $\augFac$ that gives an optimal reduction in error given a shift matrix $\augMat$.}
    \label{fig:my_label}
\end{figure}

\subsection{Approximation via Taylor Expansion}

Note that every Monte Carlo sample in \cref{eq:montecarlo} requires inverting a matrix system. There are times where this may be too computationally expensive to be feasible. However, if we are in the \emph{small noise regime}, one may take a Taylor expansion of $\sampMat^{-1}$ about $\truMat^{-1}$ --- as this means one only has the factorize an operator once for the whole estimation process. The expansion to second order is given by
\begin{equation}
    \sampMat^{-1} = \truMat^{-1} - \truMat^{-1} \errMat \truMat^{-1} + 2 \truMat^{-1} \errMat \truMat^{-1} \errMat \truMat^{-1} + O(\errMat^3) \,. 
\end{equation}
Inserting this expression into \cref{eq:montecarlo} yields
\begin{equation} \label{eq:taylor}
\hat{\augFac} \approx 1 - \frac{\sum_{i = 1}^m \rhs_i^T[ \id + 2 (\errMat_{b, i} \sampMat^{-1})^2 ]\rhs_i}{\sum_{i = 1}^m \vc{q}_i^T [\id + (\errMat_{b, i} \sampMat^{-1})^T (\errMat_{b, i} \sampMat^{-1}) + 4 (\errMat_{b, i} \sampMat^{-1})^2 ] \vc{q}_i} \,,
\end{equation}
note that we have omitted linear terms, because terms linear in $\errMat$ will be zero in expectation, by virtue of the fact that $\E[\errMat] = 0$. It is possible that higher orders of truncation may produce better results; however, unlike the elliptic shifting setting \cite{etter2020operator}, it is difficult to prove guarantees about the quality of these truncated expansions --- as one cannot use the machinery of positive definite polynomials available in the elliptic case.

\subsection{How Bootstrap Can Fail When $\optAugFac < 0$} \label{sec:badexample}

To offer a concrete example of why $\optAugFac > 0$ is a desirable trait to have when performing operator shifting --- we offer an adversarial example where bootstrapping can be made arbitrarily bad at estimating $\optAugFac$ when $\optAugFac$ is allowed to be negative. Naturally, since there are three primary modes of failure for $\optAugFac \geq 0$ (namely, the cases discussed in the previous section), the example will be a bootstrapped version of \cref{sec:condition_frob}.

To convert the example from \cref{sec:condition_frob} into a bootstrap problem we consider the sample space $\paramSpace = \mathbb{R}$ and let the mapping $\paramToMat$ be given by:
\begin{equation}
    \paramToMat : \params \mapsto \left[ \begin{array}{cc} 
        1 & \params \\
        -\params & 0 
    \end{array} \right]
\end{equation}
For parameter $\params$, the noise distribution is given by:
\begin{equation}
    \mathbb{P}_{\params} = \frac{1}{2} \left[ \delta_{\params - 1} + \delta_{\params + 1} \right] \,,
\end{equation}
where $\delta_{\params}$ is the dirac delta distribution at $\params$. In \cref{fig:bootstrap} below, we see that bootstrapping the optimal shift factor in this example always returns an average shift factor that is positive, telling the algorithm to shrink the inverse operator, whereas the true optimal shift factor is unboundedly negative as one takes $\truParam = \epsilon \rightarrow 0$, meaning that it would have been optimal to enlarge the inverse operator instead.

\begin{figure}[h]
    \centering
    \includegraphics[width=0.3\paperwidth]{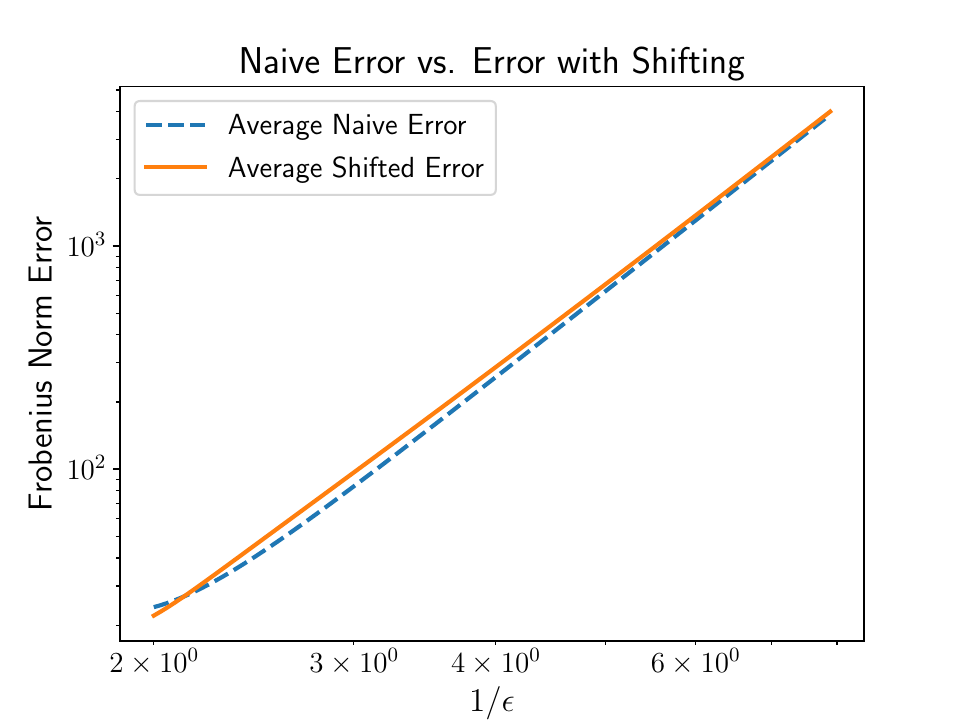}
    \includegraphics[width=0.3\paperwidth]{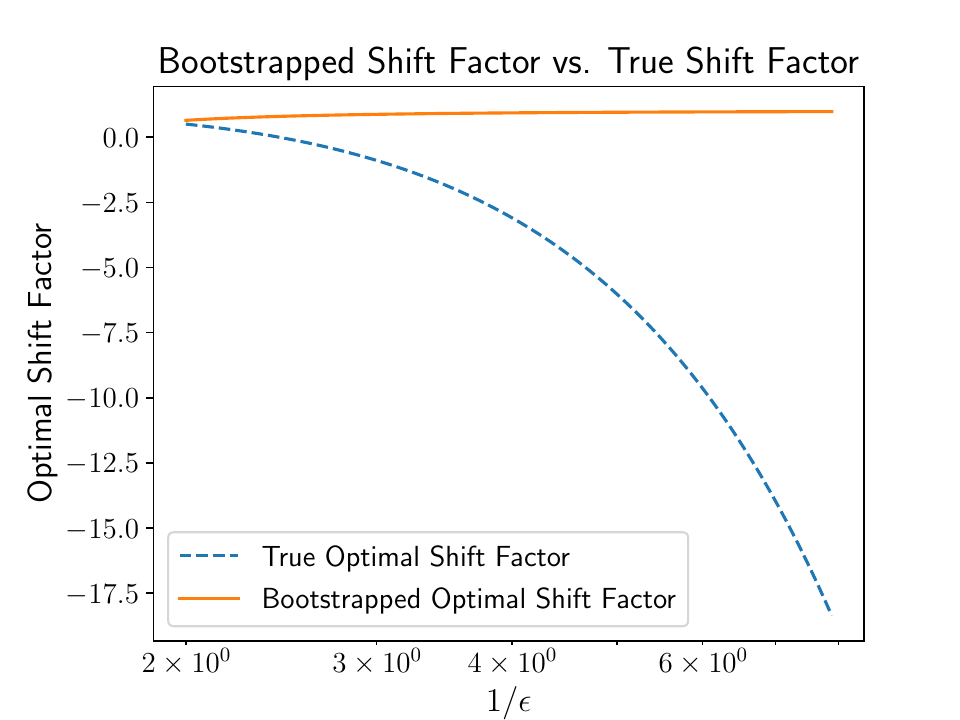}
    \caption{\label{fig:bootstrap} An example where bootstrapping can mis-estimate the optimal shift factor $\optAugFac$ when $\optAugFac < 0$. In particular, we note that the bootstrapped algorithm always returns a positive shift factor on average. On the left: a plot of the error of the naive estimator $\sampMat^{-1}$ versus the error of the estimator $\augOpAt{\bootstrapAugFac}^{-1}$. On the right: the true optimal shift factor $\optAugFac$ versus the average shift factor returned by  bootstrapping. We observe that the error becomes slightly worse when bootstrapped operator shifting is applied because the algorithm shrinks the matrix rather than enlarging it.}
\end{figure}

Notably, the examples in \cref{sec:aniso} and \cref{sec:noisesym} do not exhibit similar failure when bootstrapped. We have tested them both and bootstrapping does quite well in recovering the optimal shift factor of $\optAugFac \approx -1$. We believe that this discrepancy in the effectiveness of bootstrapping has to do with the fact that the above example is substantially worse conditioned.

\section{Numerical Experiments} \label{sec:exp}

To confirm our theoretical results, we will examine the asymmetric operator shift algorithm applied to the problem of more accurately computing a value function for a Markov chain whose probability transition function must be estimated from data. 

\subsection{Background} 
For context, a discrete Markov chain is a stochastic process $X_i$ for $i \in \mathbb{Z}_{\geq 0}$ on a finite state set $V$ is a process that satisfies the Markov property
\begin{equation} \label{eq:markov}
    \mathbb{P}(X_i = v_i \mid X_{i - 1} = v_{i - 1}, ..., X_1 = v_1) = \mathbb{P}(X_i = v_i \mid X_{i - 1} = v_{i - 1}) \,.
\end{equation}
A discrete Markov chain is time homogeneous if the right-hand-side of \cref{eq:markov} does not depend on the time $i$. In this situation, the Markov chain is completely characterized by its probability transition matrix
\begin{equation}
    \mat{P}_{v,u} = \mathbb{P}(X_i = u \mid X_{i - 1} = v) \,,
\end{equation}
as well as a distribution $\mathbb{P}(X_0 = v)$ of the initial state $X_0$. There are countless examples of such Markov chains from the fields of probability, statistics, reinforcement learning, and physics. Good references for the reinforcement learning flavored problems we will introduce shortly include \cite{puterman2014markov, bertsekas2019reinforcement}.

One is often interested in computing or approximating functionals of the process $X_i$. For example, one may think of $X_i$ as an agent navigating through the set $V$ via the means of some fixed policy, where the agent obtains a fixed reward $r(v)$ whenever it transitions from the state $v$. In many situations in reinforcement learning, one is often interested in the average discounted reward $Q(v)$ the agent obtains over its life-cycle when beginning at $X_0 = v$, namely,
\begin{equation} \label{eq:inf}
    Q(v) = \E\left[ \sum_{i = 0}^{\infty} \gamma^i r(X_i) \mid X_0 = v \right] \,.
\end{equation}
$Q(v)$ is typically referred to as the \emph{value function} of $X_i$ with respect to reward function $r(v)$. It is used to gauge the quality of the agent's policy at maximizing the discounted reward it receives over its life-time. The quantity $\gamma \in (0, 1)$ is known as a \emph{discount factor} and determines how much immediate reward is valued against future reward. One notes that the function $Q$ is linear in $r$. One may use first transition analysis to express the relationship between $Q$ and $r$ in matrix form. Isolating the first term in the infinite sum \cref{eq:inf} gives
\begin{equation}
    Q(v) = r(v) + \gamma \sum_{u} \mathbb{P}(X_1 = u \mid X_0 = v) Q(u) \,.
\end{equation}
Alternatively,
\begin{equation}
    \vc{q} = \vc{r} + \gamma \mat{P} \vc{q} \,,
\end{equation}
where $\vc{q}$ is the vector with entries $\vc{q}_v = Q(v)$ and $\vc{r}$ is the vector with entries $\vc{r}_v = r(v)$. This finally produces a linear system for the value function,
\begin{equation}
    \truMat \vc{q} \equiv (\id - \gamma \mat{P}) \vc{q} = \vc{r} \,.
\end{equation}

However, there are many situations where the transition matrix $\mat{P}$ may not be known exactly. In reinforcement learning, for example, one does not have access to $\mat{P}$ itself, but rather a number of finite realizations of the process $X_i$ as it traverses the state space. Therefore, instead of having access to the ground truth $\mat{P}$, one usually has access to a noisy version $\hat{\mat{P}}$ thereof. A naive solve will give
\begin{equation}
    (\id - \gamma \hat{\mat{P}}) \hat{\vc{q}} = \vc{r} \,.
\end{equation}
We will see how operator shifting can be used to reduce the average error between our estimate $\hat{\vc{q}}$ and the ground truth $\vc{q}$ in the residual norm. 

\subsection{Noise Model}

One popular way to approximate a Markov chain's probability transition matrix from a sample $\hat{X}_0, \hat{X}_1, ..., \hat{X}_t$ of the Markov chain is to approximate the probability $\mathbb{P}(X_i = v \mid X_{i - 1} = u)$ by examining the fraction of times $X_i$ transitions to $v$ from $u$ out of the total number of times $X_i$ transitions from $u$, i.e.,
\begin{equation}
    \mathbb{P}(X_i = v \mid X_{i - 1} = u) = \frac{\mathbb{P}(X_i = v \text{ and } X_{i - 1} = u)}{\mathbb{P}(X_{i - 1} = u)} \approx \frac{\# \{ i \mid \hat{X}_i = v \text{ and } \hat{X}_{i - 1} = u\} }{\# \{ i \mid \hat{X}_i = u \text{ and } i \geq 1 \}} \,. 
\end{equation}
But while this estimator is commonly used --- it is difficult to ensure coverage of the state space in a way that is natural, as one must take care to ensure that $\# \{ i \mid \hat{X}_i = u \text{ and } i \geq 1 \}$ is positive for every $u$. 

Therefore, to model the uncertainty in a Markov chain constructed from data, we assume that, instead of observing the first $t$ states of the Markov chain $X_i$, we instead observe the first $N$ transitions of the Markov chain $X_i$ out of every state $v$. We denote the first $N$ transitions out of a state $v$ as $\hat{Y}_{v, 1}, \hat{Y}_{v, 2}, ..., \hat{Y}_{v, N}$. If the Markov chain is irreducible, $\hat{Y}_{v, 1}, \hat{Y}_{v, 2}, ..., \hat{Y}_{v, N}$ are well-defined almost surely, and have distribution
\begin{equation}
    \hat{Y}_{v, 1}, \hat{Y}_{v, 2}, ..., \hat{Y}_{v, N} \sim \mathbb{P}(X_i \mid X_{i - 1} = v)\,, \qquad \text{i.i.d.}
\end{equation}
One may then easily estimate the probability transition matrix by using the empirical distribution of $\mathbb{P}(X_i \mid X_{i - 1} = v)$ to construct each row of $\hat{\mat{P}}$,
\begin{equation}
    \hat{\mat{P}}_{v, u} \equiv \frac{\# \{ \hat{Y}_{v, i} = u \}}{N} \approx \mathbb{P}(X_i = u \mid X_{i - 1} = v) = \mat{P}_{v, u} \,.
\end{equation}
Note that $\hat{\mat{P}}$ is unbiased and hence that
\begin{equation}
    \E[ \sampMat ] = \truMat \,.
\end{equation}
We pose the question: is it possible to use operator shifting to increase the accuracy of the naive estimate of the value function $Q$? Our numerical results suggest that the answer to this question is yes.

\subsection{Numerical Results}

To test the theory, we consider three different Markov chains on 1D, 2D, and 3D grids respectively. For our experiments, we use the operator shift $\augMat = \sampMat^{-1}$,
\begin{equation}
    \tilde{\mat{A}}^{-1} = \sampMat^{-1} - \hat{\augFac} \augMat = (1 - \hat{\augFac}) \sampMat^{-1} = (1 - \hat{\augFac}) (\id - \gamma \hat{\mat{P}})^{-1} \,,
\end{equation}
where we use the Taylor expanded bootstrapped Monte Carlo approximation $\hat{\augFac}$ from \cref{eq:taylor}. We compute our shifted solution via
\begin{equation}
    \tilde{\vc{q}} = \tilde{\mat{A}}^{-1} \vc{r} \,.
\end{equation}

\subsubsection{Random Walk with Drift on a 1D Grid}

For this example, we consider the Markov chain of a lazy random walk with drift on a 1D periodic grid graph. We let the state space $V$ be the set of integers $\{1, 2, ..., K\}$ where $K = 16$ and take the probability transition matrix to be
\begin{equation}
    \mathbb{P}^{(\text{1D})}_{\ell, r}(X_i = u \mid X_{i - 1} = v) = \begin{cases}
    \ell & u = v - 1 \mod K\\
    1 - \ell - r & u = v \mod K \\
    r & u = v + 1 \mod K \\
    0 & \text{o.w.}
    \end{cases}
\end{equation}
We test a handful of different values for $(\ell, r)$ as well as a handful of different values for the sample count $N$ that determines the ``noise'' in the matrix $\hat{\mat{P}}$ (higher $N$ means less noise). For the reward function --- we consider two cases: for the first, we consider deterministic reward function given by
\begin{equation}
    r^{(\text{1D})}(v) = \sin(4 \pi v / K) \,.
\end{equation}
For the second, we consider a random isotropic reward vector prior where $\vc{r}^{(\text{1D})} \sim \mathcal{N}(\vc{0}, \id)$.

In addition to the above collection of transition matrices, we also consider random walks with the ability to skip a vertex in the grid,
\begin{equation}
    \mathbb{P}^{(\text{1D})}_{\ell_1, \ell_2, r_1, r_2}(X_i = u \mid X_{i - 1} = v) = \begin{cases}
    \ell_1 & u = v - 2 \mod K \\
    \ell_2 & u = v - 1 \mod K \\
    1 - \ell_1 - \ell_2 - r_1 - r_2 & u = v \mod K \\
    r_1 & u = v + 1 \mod K \\
    r_2 & u = v + 2 \mod K \\
    0 & \text{o.w.}
    \end{cases}
\end{equation}
as well as transition matrices corresponding to a random walk on a complete graph,
\begin{equation}
    \mathbb{P}^{(\text{1D})}_{\text{complete}}(X_i = u \mid X_{i - 1} = v) = \frac{1}{K} \,.
\end{equation}

For these 1D experiments, we use the discount factor
\begin{equation}
    \gamma^{(1D)} = 0.99 \,.
\end{equation}

We present the results of our numerical experiments for the specified reward vectors in \cref{tab:results} as well as for isotropic reward vectors in \cref{tab:results_iso}.

\subsubsection{Random walk with Drift on 2D and 3D Grids}

Now let us consider the Markov chain of a lazy random walk with non-uniform drift on a 2D periodic grid graph. We let the state space $V$ be the set of tuples $\{1, 2, ..., K\} \times \{1, 2, ..., K\}$ where $K = 16$ and we consider two probability transition matrices, the first a standard uniform random walk:
\begin{equation}
    \mathbb{P}^{(\text{2D})}_{\text{unif}}(X_i = u \mid X_{i - 1} = v) = \begin{cases}
    1/4 & |u - v| = 1 \\
    0 & \text{o.w.}
    \end{cases}
\end{equation}
and the second a random walk with a non-uniform drift:
\begin{equation}
    \mathbb{P}^{(\text{2D})}_{\text{nonunif}}(X_i = u \mid X_{i - 1} = v) = \begin{cases}
    \frac{1}{4} \pm \frac{1}{8} \sin(2 \pi v_x / K)& u = v \mp (1, 0) \\
    \frac{1}{4} \pm \frac{1}{8} \sin(2 \pi v_y / K) & u = v \mp (0, 1) \\
    0 & \text{o.w.}
    \end{cases}
\end{equation}
For the reward function, we consider both a deterministic reward
\begin{equation}
    r^{(\text{2D})}(v) = - \sin(2 \pi v_x / K) \sin (2 \pi v_y / K) \,,
\end{equation}
as well as an isotropic reward vector prior where $\vc{r}^{(\text{2D})} \sim \mathcal{N}(\vc{0}, \id)$. For these 2D experiments, we use the discount factor
\begin{equation}
    \gamma^{(2D)} = 0.99 \,.
\end{equation}

We define similar transition matrices on 3D periodic grid graphs, where $V = \{1, ..., K\}^3$ and $K = 8$:
\begin{equation}
    \mathbb{P}^{(\text{3D})}_{\text{unif}}(X_i = u \mid X_{i - 1} = v) = \begin{cases}
    1/6 & |u - v| = 1 \\
    0 & \text{o.w.}
    \end{cases}
\end{equation}
as well as an analogous 3D random walk with a non-uniform drift:
\begin{equation}
    \mathbb{P}^{(\text{3D})}_{\text{nonunif}}(X_i = u \mid X_{i - 1} = v) = \begin{cases}
    \frac{1}{6} \pm \frac{1}{8} \sin(2 \pi v_x / K)& u = v \mp (1, 0, 0) \\
    \frac{1}{6} \pm \frac{1}{8} \sin(2 \pi v_y / K) & u = v \mp (0, 1, 0) \\
    \frac{1}{6} \pm \frac{1}{8} \sin(2 \pi v_z / K) & u = v \mp (0, 0, 1) \\
    0 & \text{o.w.}
    \end{cases}
\end{equation}
For the reward function, we consider both a deterministic reward
\begin{equation}
    r^{(\text{3D})}(v) =  - \sin(2 \pi v_x / K) \sin (2 \pi v_y / K) \sin(2 \pi v_z / K) \,,
\end{equation}
as well as an isotropic reward vector prior where $\vc{r}^{(\text{3D})} \sim \mathcal{N}(\vc{0}, \id)$. For these 3D experiments, we use the discount factor
\begin{equation}
    \gamma^{(3D)} = 0.9 \,.
\end{equation}

We present the results of our numerical experiments for the specified reward vectors in \cref{tab:results} as well as for isotropic reward vectors in \cref{tab:results_iso}.
 
\subsection{Discussion}

\bgroup
\def\arraystretch{1.2}
\begin{table}[p]
    \centering
\begin{tabular}{l||l|ll|ll}
Chain & Samples ($N$) & Naive Error & $\pm 2 \sigma$ & Shifted Error & $\pm 2\sigma$ \\
\hline \hline
$\mathbb{P}^{(\text{1D})}_{1/4, 1/4}$ & 16 & $166 \%$ & $\pm 13.5 \%$ & $53.5 \%$ & $\pm 1.35 \%$ \\
& 32 & $48.8 \%$ & $\pm 1.50 \%$ & $30.1 \%$ & $\pm 0.53 \%$ \\
& 64 & $20.1 \%$ & $\pm 0.45 \%$ & $16.0 \%$ & $\pm 0.27 \%$ \\
\hline
$\mathbb{P}^{(\text{1D})}_{1/6, 2/6}$ & 16 & $115 \%$ & $\pm 10.2\%$ & $42.2 \%$ & $\pm 1.57 \%$ \\
& 32 & $31.7 \%$ & $\pm 1.11\%$ & $20.8 \%$ & $\pm 0.47 \%$ \\
& 64 & $ 12.7\%$ & $\pm 0.31 \%$ & $10.4 \%$ & $\pm 0.20 \%$ \\
\hline
$\mathbb{P}^{(\text{1D})}_{0, 1/2}$ & 16 & $11.7 \%$ & $\pm 1.06 \%$ & $8.70\%$ & $\pm 0.67\%$ \\
& 32 & $4.02 \%$ & $\pm 0.12\%$ & $3.49\%$ & $\pm 0.04\%$ \\
& 64 & $1.75 \%$ & $\pm 0.04\%$ & $1.63\%$ & $\pm 0.03\%$ \\
\hline
$\mathbb{P}^{(\text{1D})}_{1/8,1/8,1/8,1/8}$ & 16 & $55.2 \%$ & $\pm 2.61 \%$ & $29.9 \%$ & $\pm 0.59 \%$ \\
& 32 & $19.5 \%$ & $\pm 0.44 \%$ & $15.0 \%$ & $\pm 0.24\%$ \\
& 64 & $8.55 \%$ & $\pm 0.15 \%$ & $7.55 \%$ & $\pm 0.12 \%$ \\
\hline
$\mathbb{P}^{(\text{1D})}_{\text{complete}}$ & 16 & $6.63\%$ & $\pm 0.18\%$ & $6.09\%$ & $\pm 0.09\%$ \\
& 32 & $3.25 \%$ & $\pm 0.05\%$ & $3.10 \%$ & $\pm 0.04\%$ \\
& 64 & $1.43 \%$ & $\pm 0.02\%$ & $1.41 \%$ & $\pm 0.02\%$ \\
\hline \hline
$\mathbb{P}^{(\text{2D})}_{\text{unif}}$ & 12 & $206\%$ & $\pm 42.4\%$ & $66.9\%$ & $\pm 3.26\%$ \\
& 24 & $91.3\%$ & $\pm 14.3\%$ & $47.5\%$ & $\pm 3.00\%$ \\
& 48 & $43.3\%$ & $\pm 5.63\%$ & $30.1\%$ & $\pm 2.29\%$ \\
\hline
$\mathbb{P}^{(\text{2D})}_{\text{nonunif}}$ & 12 & $113\%$ & $\pm 20.8\%$ & $52.7\%$ & $\pm 7.91\%$ \\
& 24 & $51.7\%$ & $\pm 7.91\%$ & $33.8\%$ & $\pm 3.08\%$ \\
& 48 & $24.7\%$ & $\pm 3.40\%$ & $19.7\%$ & $\pm 2.05\%$ \\
\hline \hline
$\mathbb{P}^{(\text{3D})}_{\text{unif}}$ & 4 & $92.7\%$ & $\pm 30.9\%$ & $48.3\%$ & $\pm 7.25\%$ \\
& 8 & $38.8\%$ & $\pm 9.09\%$ & $27.8\%$ & $\pm 4.12\%$ \\
& 16 & $18.0\%$ & $\pm 3.66\%$ & $15.2\%$ & $\pm 2.36\%$ \\
\hline
$\mathbb{P}^{(\text{3D})}_{\text{nonunif}}$ & 4 & $77.8\%$ & $\pm 22.6\%$ & $44.3\%$ & $\pm 7.00\%$ \\
& 8 & $33.4\%$ & $\pm 7.86\%$ & $25.1\%$ & $\pm 4.04\%$ \\
& 16 & $15.7\%$ & $\pm 3.20\%$ & $13.5\%$ & $\pm 2.29\%$ \\
\end{tabular} \label{tab:results}
\caption{\textbf{Deterministic Value Function Comparison}: A performance comparison between the accuracy of the shifted estimator $\tilde{\truSol}$ for the solution $\truSol$ versus the naive estimator $\sampSol$. The error is measured as a percentage with respect to the residual norm of the true solution. Since the error is calculated via Monte Carlo, we provide a $95\%$ confidence interval in the $\pm 2\sigma$ column. }
\end{table}
\egroup

\bgroup
\def\arraystretch{1.2}
\begin{table}[p]
    \centering
\begin{tabular}{l||l|ll|ll}
Chain & Samples ($N$) & Naive Error & $\pm 2 \sigma$ & Shifted Error & $\pm 2\sigma$ \\
\hline \hline
$\mathbb{P}^{(\text{1D})}_{1/4, 1/4}$ & 16 & $105 \%$ & $\pm 25.4 \%$ & $48.4 \%$ & $\pm 6.16\%$ \\
& 32 & $30.8 \%$ & $\pm 2.95\%$ & $22.8\%$ & $\pm 1.70\%$ \\
& 64 & $12.7\%$ & $\pm 1.00\%$ & $11.1\%$ & $\pm 0.77\%$ \\
\hline
$\mathbb{P}^{(\text{1D})}_{1/6, 2/6}$ & 16 & $64.1\%$ & $\pm 16.0\%$ & $33.9\%$ & $\pm 5.52\%$ \\
& 32 & $17.2\%$ & $\pm 1.77\%$ & $13.5\%$ & $\pm 1.14\%$ \\
& 64 & $6.84\%$ & $\pm 0.52\%$ & $6.13\%$ & $\pm 0.41\%$ \\
\hline
$\mathbb{P}^{(\text{1D})}_{0, 1/2}$ & 16 & $11.0\%$ & $\pm 3.30\%$ & $8.49\%$ & $\pm 2.40\%$ \\
& 32 & $3.77\%$ & $\pm 0.33\%$ & $3.34\%$ & $\pm 0.26\%$ \\
& 64 & $1.64 \%$ & $\pm 0.11\%$ & $1.54\%$ & $\pm 0.10\%$ \\
\hline
$\mathbb{P}^{(\text{1D})}_{1/8,1/8,1/8,1/8}$ & 16 & $55.1\%$ & $\pm 2.62\%$ & $29.9\%$ & $\pm 0.59\%$ \\
& 32 & $19.5\%$ & $\pm 0.44\%$ & $15.0\%$ & $\pm 0.24\%$ \\
& 64 & $8.55\%$ & $\pm 0.15\%$ & $7.55\%$ & $\pm 0.12\%$ \\
\hline
$\mathbb{P}^{(\text{1D})}_{\text{complete}}$ & 16 & $6.21 \%$ & $\pm 0.29\%$ & $5.81\%$ & $\pm 0.25\%$ \\
& 32 & $2.98\%$ & $\pm 0.13\%$ & $2.89\%$ & $\pm 0.12\%$ \\
& 64 & $1.46\%$ & $\pm 0.06\%$ & $1.43\%$ & $\pm 0.06\%$ \\
\hline \hline
$\mathbb{P}^{(\text{2D})}_{\text{unif}}$ & 12 & $25.7\%$ & $\pm 7.51\%$ & $20.4\%$ & $\pm 4.89\%$ \\
& 24 & $11.3\%$ & $\pm 2.93\%$ & $10.2\%$ & $\pm 2.38\%$ \\
& 48 & $5.35\%$ & $\pm 1.31\%$ & $5.07\%$ & $\pm 1.19\%$ \\
\hline
$\mathbb{P}^{(\text{2D})}_{\text{nonunif}}$ & 12 & $20.2\%$ & $\pm 6.17\%$ & $16.7\%$ & $\pm 4.89\%$ \\
& 24 & $8.96\%$ & $\pm 2.43\%$ & $8.20\%$ & $\pm 2.04\%$ \\
& 48 & $4.26\%$ & $\pm 1.09\%$ & $4.07\%$ & $\pm 1.00\%$ \\
\hline \hline
$\mathbb{P}^{(\text{3D})}_{\text{unif}}$  & 4 & $35.3\%$ & $\pm 35.0\% $ & $26.2\%$ & $\pm 20.8\%$ \\
& 8 & $14.6 \%$ & $\pm 11.8\%$ & $12.7\%$ & $\pm 9.11\%$ \\
& 16 & $6.76 \%$ & $\pm 5.03\%$ & $6.33\%$ & $\pm 4.43\%$ \\
\hline
$\mathbb{P}^{(\text{3D})}_{\text{nonunif}}$  & 4 & $33.4 \%$ & $\pm 35.4\%$ & $25.1\%$ & $\pm 21.6\%$ \\
& 8 & $13.9\%$ & $\pm 12.0\%$ & $1.22\%$ & $\pm 9.36\%$ \\
& 16 & $6.43\%$ & $\pm 5.12\%$ & $6.03\%$ & $\pm 4.53\%$ \\
\end{tabular} \label{tab:results_iso}
\caption{\textbf{Frobenius Error Comparison}: A performance comparison between the accuracy of the shifted operator estimator $\tilde{\truSol}$ for the solution $\truSol$ versus the naive estimator $\sampSol$. The value function is sampled from the prior $\mathcal{N}(\vc{0}, \id)$ (hence, the error is the Frobenius operator error in expectation). The error is measured as a percentage with respect to the average residual norm of the true solutions from the prior. Since the error is calculated via Monte Carlo, we provide a $95\%$ confidence interval in the $\pm 2\sigma$ column.}
\end{table}
\egroup

As we see in \cref{tab:results} and \cref{tab:results_iso}, operator shifting can provide significant reductions in error for a variety of different Markov chain problems, measured in the reduction of residual norm error for both deterministic and random value functions (\cref{tab:results}). As predicted by the theory, the method also reduces the error in isotropic residual matrix norm, as seen in (\cref{tab:results_iso}), but these improvements seem are more marginal. This behavior is present for all different levels of sample count $N$ we tested. Therefore, while there are theoretical limitations in the non-symmetric case of operator shifting that make the theory less powerful than the symmetric positive definite case, operator shifting still functions quite well on the Markov chain problems we've tested it on.

Note that the confidence intervals for the 2D and 3D problems in the isotropic setting seen in \cref{tab:results_iso} are quite large. This is despite the fact that we use a very large number of samples (256,000 for 2D, and 25,600 for 3D) to estimate the isotropic error. However, the theory tells us that the naive isotropic error will always be greater than the shifted isotropic error, so this chart is provided more-so as a way to gauge the magnitude of the error reduction, rather than existence of an error reduction.

\section{Conclusion}

We conclude this paper by noting that we have accomplished two main goals. First, we have
investigated the extent to which the symmetric positive definite operator shifting theory of
\cite{etter2020operator} can be applied to the general non-symmetric matrix case. We have found that
under the assumptions of noise symmetry and right-hand-side isotropy, the optimal shift factor is always positive. This answers the question of whether or not operator shifting towards the origin always reduces error as it does in the positive definite symmetric case. Moreover, we have fully characterized the pathological situations in which this does not happen. We have also investigated the small noise regime, where we showed that it is possible to discard the noise symmetry assumption.

Second, we have shown empirically that operator shifting can \emph{still} reduce error for noisy Markov chain problems, even when the aforementioned theoretical assumptions are not satisfied. This holds true across a number of different Markov chains and for both deterministic and isotropic right-hand-side. 

One may continue this work by attempting to apply some form of operator shifting to real problems -- for example, in control theory or reinforcement learning, where the underlying Markov decision process may not be fully known and must be estimated from data. Another more theoretical possibility would be to investigate if the operator shifting framework can be applied to optimization to create optimization algorithms that are less vulnerable to noise in the objective function, as is common in many real world applications. Other potentially interesting avenues of work may include extending operator shifting to an infinite dimensional setting, or trying to learn an appropriate shift $\augMat$ from data.

\section{Source Code}

For reproducibility and reference purposes, we provide an accompanying implementation of our algorithms and numerical experiments at \\

\begin{center}
    \url{https://github.com/UniqueUpToPermutation/OperatorShifting}.
\end{center}

\section{Acknowledgements}

The work of L.Y. is partially supported by the National Science Foundation under award DMS1818449 and DMS-2011699.


\appendix

\bibliographystyle{siamplain}
\bibliography{references}

\begin{thebibliography}{10}

\bibitem{anderson2010introduction}
{\sc G.~W. Anderson, A.~Guionnet, and O.~Zeitouni}, {\em An introduction to
  random matrices}, vol.~118, Cambridge university press, 2010.

\bibitem{aspri2020data}
{\sc A.~Aspri, Y.~Korolev, and O.~Scherzer}, {\em Data driven regularization by
  projection}, Inverse Problems, 36 (2020), p.~125009.

\bibitem{bertsekas2019reinforcement}
{\sc D.~Bertsekas}, {\em Reinforcement learning and optimal control}, Athena
  Scientific, 2019.

\bibitem{bleyer2013double}
{\sc I.~R. Bleyer and R.~Ramlau}, {\em A double regularization approach for
  inverse problems with noisy data and inexact operator}, Inverse Problems, 29
  (2013), p.~025004.

\bibitem{buccini2018semiblind}
{\sc A.~Buccini, M.~Donatelli, and R.~Ramlau}, {\em A semiblind regularization
  algorithm for inverse problems with application to image deblurring}, SIAM
  Journal on Scientific Computing, 40 (2018), pp.~A452--A483.

\bibitem{candes2010matrix}
{\sc E.~J. Candes and Y.~Plan}, {\em Matrix completion with noise}, Proceedings
  of the IEEE, 98 (2010), pp.~925--936.

\bibitem{etter2020operator}
{\sc P.~A. Etter and L.~Ying}, {\em Operator augmentation for noisy elliptic
  systems}, arXiv preprint arXiv:2010.09656,  (2020).

\bibitem{golub1980analysis}
{\sc G.~H. Golub and C.~F. Van~Loan}, {\em An analysis of the total least
  squares problem}, SIAM journal on numerical analysis, 17 (1980),
  pp.~883--893.

\bibitem{james1992estimation}
{\sc W.~James and C.~Stein}, {\em Estimation with quadratic loss}, in
  Breakthroughs in statistics, Springer, 1992, pp.~443--460.

\bibitem{keshavan2009matrix}
{\sc R.~Keshavan, A.~Montanari, and S.~Oh}, {\em Matrix completion from noisy
  entries}, in Advances in neural information processing systems, 2009,
  pp.~952--960.

\bibitem{lunz2021learned}
{\sc S.~Lunz, A.~Hauptmann, T.~Tarvainen, C.-B. Schonlieb, and S.~Arridge},
  {\em On learned operator correction in inverse problems}, SIAM Journal on
  Imaging Sciences, 14 (2021), pp.~92--127.

\bibitem{marzouk2016introduction}
{\sc Y.~Marzouk, T.~Moselhy, M.~Parno, and A.~Spantini}, {\em An introduction
  to sampling via measure transport}, arXiv preprint arXiv:1602.05023,  (2016).

\bibitem{palmer2005representing}
{\sc T.~Palmer, G.~Shutts, R.~Hagedorn, F.~Doblas-Reyes, T.~Jung, and
  M.~Leutbecher}, {\em Representing model uncertainty in weather and climate
  prediction}, Annu. Rev. Earth Planet. Sci., 33 (2005), pp.~163--193.

\bibitem{puterman2014markov}
{\sc M.~L. Puterman}, {\em Markov decision processes: discrete stochastic
  dynamic programming}, John Wiley \& Sons, 2014.

\bibitem{soize2005comprehensive}
{\sc C.~Soize}, {\em A comprehensive overview of a non-parametric probabilistic
  approach of model uncertainties for predictive models in structural
  dynamics}, Journal of sound and vibration, 288 (2005), pp.~623--652.

\bibitem{stein1956inadmissibility}
{\sc C.~Stein et~al.}, {\em Inadmissibility of the usual estimator for the mean
  of a multivariate normal distribution}, in Proceedings of the Third Berkeley
  symposium on mathematical statistics and probability, vol.~1, 1956,
  pp.~197--206.

\bibitem{tao2012topics}
{\sc T.~Tao}, {\em Topics in random matrix theory}, vol.~132, American
  Mathematical Soc., 2012.

\bibitem{tikhonov1963solution}
{\sc A.~N. Tikhonov}, {\em On the solution of ill-posed problems and the method
  of regularization}, in Doklady Akademii Nauk, vol.~151, Russian Academy of
  Sciences, 1963, pp.~501--504.

\bibitem{xiu2005high}
{\sc D.~Xiu and J.~S. Hesthaven}, {\em High-order collocation methods for
  differential equations with random inputs}, SIAM Journal on Scientific
  Computing, 27 (2005), pp.~1118--1139.

\bibitem{xiu2002wiener}
{\sc D.~Xiu and G.~E. Karniadakis}, {\em The wiener--askey polynomial chaos for
  stochastic differential equations}, SIAM journal on scientific computing, 24
  (2002), pp.~619--644.

\end{thebibliography}
\end{document}